
\documentclass[mlmain]{jmlr}







\usepackage{longtable}

\usepackage{booktabs}
\usepackage[load-configurations=version-1]{siunitx} 

\usepackage{tikz}
\usepackage{tikz-cd}


\newcommand{\R}{\mathbb R}

\newcommand{\curC}{\mathcal C}
\newcommand{\curD}{\mathcal D}
\newcommand{\curE}{\mathcal E}
\newcommand{\curI}{\mathcal I}

\newcommand{\curU}{\mathcal U}

\newcommand{\Tk}{\textnormal{Tk}}
\newcommand{\code}{\textnormal{code}}
\newcommand{\Code}{\mathbf{Code}}
\newcommand{\nring}{\mathbf{NRing}}
\newcommand{\om}{\mathbf{OM}}
\newcommand{\omring}{\mathbf{OMRing}}
\newcommand{\pcode}{\mathbf{P}_\Code}

\theorembodyfont{\upshape}
\theoremheaderfont{\scshape}
\theorempostheader{:}
\theoremsep{\newline}

\jmlrvolume{}
\firstpageno{1}
\editors{List of editors' names}

\jmlryear{2025}
\jmlrworkshop{Symmetry and Geometry in Neural Representations}


\title[Covering Relations in $\pcode$]{Covering Relations in the Poset of\\ Combinatorial Neural Codes
}
 \author{\Name{R. Amzi Jeffs} \Email{amzi.jeffs@pnnl.gov}\\
 \addr Pacific Northwest National Laboratory, Richland, Washington.
 \AND
 \Name{Trong-Thuc Trang} \Email{ttrang2019@fau.edu}\\
 \addr Florida Atlantic University, Boca Raton, Florida.
 }

\begin{document}

\maketitle

\begin{abstract}
A combinatorial neural code is a subset of the power set $2^{[n]}$ on $[n]=\{1,\dots, n\}$, in which each $1\leq i\leq n$ represents a neuron and each element (codeword) represents the co-firing event of some neurons. Consider a space $X\subseteq\mathbb{R}^d$, simulating an animal's environment, and a collection $\mathcal{U}=\{U_1,…,U_n\}$ of open subsets of $X$. Each $U_i\subseteq X$ simulates a place field which is a specific region where a place cell $i$ is active. Then, the code of $\mathcal{U}$ in $X$ is defined as
$\mbox{code}(\mathcal{U},X)=\left\{\sigma\subseteq[n]\bigg|\bigcap_{i\in\sigma} U_i\setminus\bigcup_{j\notin\sigma}U_j\neq\varnothing\right\}$. If a neural code $\mathcal{C}=\mbox{code}(\mathcal{U},X)$ for some $X$ and $\mathcal{U}$, we say $\mathcal{C}$ has a realization of open subsets of some space $X$. Although every combinatorial neural code obviously has a realization by some open subsets, determining whether it has a realization by some open convex subsets remains unsolved. Many studies attempted to tackle this decision problem, but only partial results were achieved. In fact, a previous study showed that the decision problem of convex neural codes is NP-hard. Furthermore, the authors of this study conjectured that every convex neural code can be realized as a minor of a neural code arising from a representable oriented matroid, which can lead to an equivalence between convex and polytope convex neural codes. Even though this conjecture has been confirmed in dimension two, its validity in higher dimensions is still unknown. To advance the investigation of this conjecture, we provide a complete characterization of the covering relations within the poset $\pcode$ of neural codes.
\end{abstract}
\begin{keywords}
intrinsic geometric interpretation, neural codes, convexity, combinatorial topology, combinatorial geometry. 
\end{keywords}

\section{Introduction}
\label{sec:intro}
In 1971, John O'Keefe and Jonathan Dostrovsky discovered individual hippocampal neurons in rats that were active when the rats were in specific locations \citep{od71}. These neurons are known as \emph{place cells}, and the regions in the environment where these neurons fire are called \emph{place fields}. For long, these neurons have been thought of as encoding a cognitive map of a mammal's environment. Many mathematical tools such as algebraic topology have been used to decode any meaningful representation of neural signals coming from place cells. Curto and Itskov applied topological data analysis to place cell data in \citep{ci08} and found that many topological features of the stimulus space can be extracted from cell groups when place fields were assumed to be convex and even when a small portion of them was multi-peaked. Every neural code obtained from cell groups also turns out to carry one geometric information of place fields, their convexity \citep{civy13}, \citep{cur17}. This encoded information is intrinsic because it can be inferred from the neural signals of place cells alone. To have a clearer picture, we provide the following definitions and some conventions made for this work.

Place fields that correspond to active place cells in a mammalian hippocampus can be topologically simulated by open subsets of some ambient topological space representing the environment that the animal explores. Then, a code can be naturally derived from such realization of open subsets as follows. Note that, throughout this work, any ambient space is restricted to some Euclidean space $\R^d$.
\begin{definition}[Codes of covers]
\label{def:codes-of-covers}
Given a cover $\curU=\{U_1,\dots,U_n\}$ of open subsets of some Euclidean space $\R^d$. The \textbf{code of $\curU$ in $\R^d$} is defined as
\[
\code(\curU,\R^d)=\left\{\sigma\subseteq[n]\bigg|\bigcap_{i\in\sigma}U_i\setminus\bigcup_{j\not\in\sigma}U_j\neq\varnothing\right\}.
\]
\end{definition}
Conventionally, if $\sigma=\varnothing$, then the intersection of $U_i$ over all $i\in\sigma$ is $\R^d$. The cover $\mathcal{U}$ is called a \emph{realization} of $\mbox{code}(\mathcal{U},\R^d)$. The definition of combinatorial neural codes is given below. Some past works define this mathematical structure using binary patterns, but they are equivalent.

\begin{definition}[Combinatorial neural codes]
\label{def:combinatorial-codes}
A \textbf{(combinatorial) neural code} $\curC$ is a collection of some subsets of $[n]=\{1,\dots,n\}$ for some natural number $n$. Elements of $[n]$ represent \textbf{neurons} and elements of $\curC$ called \textbf{codewords} represent the co-firing events of some neurons.
\end{definition}
In this work, every combinatorial neural code must contain the $\varnothing$-codeword\footnote{In reality, this means that some certain region of the environment is not covered by any place field.} denoted by $\emptyset$. Without any ambiguity, instead of writing a codeword as a subset of $[n]$ (e.g., $\{1, 2\}$) we usually write it compactly as a string of numbers in ascending numerical order (e.g., $12$). Hereafter, the term \emph{neural code} or, shortly, \emph{code} will be used interchangeably to replace the full term \emph{combinatorial neural code}. 


Obviously, every neural code $\curC=\mbox{code}(\mathcal{U},\R^d)$ for some $\R^d$ and $\mathcal{U}$, and we say that $\curC$ has a realization. Furthermore, if there is a realization $\mathcal{U}$ consisting of open convex subsets of $\R^d$, we say that $\curC$ is an \emph{open convex neural code}. Many partial attempts have been made to answer the question ``When is a neural code convex?". The list includes, but is not limited to \citep{civy13}, \citep{gi14}, \citep{cy15}, \citep{cgjmorsy15}, \citep{cgib16}, \citep{gny16}, \citep{lsw17}, \citep{mt17}, \citep{joy18}, \citep{gms19}, and \citep{cfs19}. To advance the study of open convex neural codes, Jeffs introduced in \citep{morphisms20} the poset of combinatorial neural codes $\pcode$ and connected the category of neural codes $\Code$ with the category of neural rings $\nring$ studied in previous research. Continuing on this, Kunin, Lienkaemper, and Rosen built connections of these two categories with the categories of oriented matroids $\om$ and oriented matroid rings $\omring$ in \citep{klr20}. 
    \begin{equation*}
        \begin{tikzcd}
            \mathbf{OM} \dar[swap]{\mathsf{W^+}} \rar{\mathsf{S}} & \mathbf{OMRing} \dar{\mathsf{D}}\\
            \mathbf{Code} \rar[swap]{\mathsf{R}} & \mathbf{NRing}
        \end{tikzcd}
    \end{equation*}
These connections gave them a nice way of showing that the decision problem of open convex neural codes is in fact NP-hard and that open convex polytope neural codes (a subclass of open convex neural codes) are essentially those that lie below some representable oriented matroids in $\pcode$. They also conjectured that open convex neural codes are open convex polytope.

\begin{conjecture}[\citealp{klr20}]
\label{conj:convex-char}
A neural code $\curC$ is convex if and only if $\curC$ lies below some representable oriented matroid in $\pcode$.
\end{conjecture}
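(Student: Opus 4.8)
The plan is to prove the two implications of Conjecture~\ref{conj:convex-char} separately, with essentially all the work concentrated in the forward direction. For the reverse implication I would rely on the result of \citet{klr20} that the codes lying below a representable oriented matroid in $\pcode$ are exactly the open convex polytope codes: since an open convex polytope is in particular open and convex, every code below a representable oriented matroid is convex, and the morphism results of \citet{morphisms20} then guarantee that open convex --- and even open convex polytope --- realizability descends along $\pcode$. So this direction amounts to assembling known facts.

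For the forward implication --- the real content --- I would begin with a convex realization $\curU=\{U_1,\dots,U_n\}$ of $\curC$ in $\R^d$ and aim to exhibit a code $\curD$ that is open convex polytope and lies above $\curC$ in $\pcode$; by the identification above, the first property places $\curD$ below a representable oriented matroid, hence so is $\curC$. A natural source for the target $\curD$ is the following construction: choose a witness point $p_\sigma$ in each atom $A_\sigma=\bigcap_{i\in\sigma}U_i\setminus\bigcup_{j\notin\sigma}U_j$ for $\sigma\in\curC$, and let $P_i$ be a small open thickening of $\mathrm{conv}\{p_\sigma : i\in\sigma\}$. Since $\mathrm{conv}\{p_\sigma : i\in\sigma\}$ is a compact subset of the convex set $U_i$ avoiding every $p_\tau$ with $i\notin\tau$, a sufficiently small thickening keeps each $p_\sigma$ in the $\sigma$-atom of $\{P_i\}$, so $\curC\subseteq\code(\{P_i\},\R^d)$; but this inclusion is usually proper, because hulls of the chosen points may overlap in patterns absent from $\curU$. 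Since $\pcode$ relations need not respect such inclusions, one cannot simply declare $\curC\le\code(\{P_i\},\R^d)$; instead I would allow extra neurons, adjoining polytopes $P_{n+1},\dots,P_m$ that carve away the spurious codewords, and then certify $\curC\le\curD$ for the resulting open convex polytope code $\curD$ on $[m]$.

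This certification is where the characterization of covering relations in $\pcode$ established in this paper is meant to be the decisive tool. Every $\pcode$ relation factors as a chain of covering steps, and the characterization describes such a step concretely --- which codewords, and on which ground set, may be added or deleted --- so the question of whether $\curC$ lies below a representable oriented matroid reduces to whether one can build, step by step, a chain of covering relations from $\curC$ up to a polytope code, with the $P_i$ construction acting as a blueprint for the carving neurons introduced along the way. The step I expect to be the main obstacle is exactly the existence of such a chain: showing that the upward slack in $\pcode$ can always be routed through polytope codes. In the plane, planar convex bodies and polygons share the same intersection combinatorics up to perturbation, which is why the conjecture is settled in dimension two; for $d\ge 3$ there is no such rigidity, and it is not known whether enlarging the neuron set always suffices to absorb the spurious codewords. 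A secondary, more technical difficulty is the interaction of thickening with dimension: the hulls $\mathrm{conv}\{p_\sigma : i\in\sigma\}$ may be lower-dimensional, so uniform thickening can itself create new intersections, and one must either place the $p_\sigma$ in general position within their atoms or thicken non-uniformly.
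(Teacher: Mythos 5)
The statement you were given is labeled a \emph{conjecture} in the paper, and the paper does not prove it. The authors explicitly state that it has only been confirmed in $\R^2$ (by Bukh and Jeffs) and ``has not been proven in general''; the paper's actual contribution, the characterization of covering relations in $\pcode$, is offered as a tool that \emph{might} advance an attack on the conjecture, not as a resolution of it. So there is no proof in the paper against which to compare your attempt, and the relevant question is whether your proposal in fact closes the conjecture. It does not, and you largely acknowledge this.

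Your reverse implication is fine and consists of correctly assembling known facts: by \citet{klr20}, codes lying below a representable oriented matroid in $\pcode$ are exactly the open convex polytope codes; open convex polytope codes are in particular open convex; and open convexity descends along $\pcode$ by the minor results in \citet{morphisms20}. This direction is not where the difficulty lies.

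The forward implication is the open content, and your sketch does not supply it. Beyond the general-position issue you already flag, there is a concrete flaw in the carving step. Adjoining extra polytopes $P_{n+1},\dots,P_m$ does \emph{not} change the restriction of $\code(\{P_1,\dots,P_m\},\R^d)$ to $[n]$: deletion of the new neurons still surjects onto the same too-large code $\code(\{P_1,\dots,P_n\},\R^d)$, not onto $\curC$. So deletion of the added neurons is not even a morphism into $\curC$, and the extra polytopes cannot ``absorb'' spurious codewords via that map. A surjective morphism $\curD\to\curC$ from a polytope code would have to be something other than deletion, and your sketch gives no indication of what it should be or why one should exist. That is precisely the obstacle you name as ``the main obstacle,'' and it is the entire substance of the open problem: showing that a chain of upward covering steps from $\curC$ can always be routed through an open convex polytope code. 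Identifying this accurately is useful, and the observation that $\pcode$ is graded by trunk count (so relations do factor through covers) is correct, but what you have is a research plan for how the paper's covering-relation characterization is \emph{hoped} to be used, not an argument that proves Conjecture~\ref{conj:convex-char}.
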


The above conjecture is true in the plane $\R^2$ \citep{bj23}, but has not been proven in general. If this conjecture holds, we can avoid the decision problem of open convex neural codes and instead check the representability of all oriented matroid codes lying above a given code. While being $\exists\R$-hard\footnote{The complexity class $\exists\R$ is read as the \emph{existential theory of the reals}.}, the representability of oriented matroids is still an active area of research and can be solved for small ranks. However, to be certain of this replacement, we need to validate or refute \conjectureref{conj:convex-char}. One way to do this is to construct a method to travel up the poset $\pcode$ towards oriented matroid codes above it. Although, for the time being, we have not been able to fully construct a method for this idea, we know how one can ``climb up" from one code to any code that covers it, which will be presented in this paper. Our paper is structured as follows: some background of the poset $\pcode$ is given in \sectionref{sec:poset}, and the reader can find our main contribution which is the method of traveling upward $\pcode$ in \sectionref{sec:up-covering}. All supporting results can be found in \appendixref{sec:down-covering} and \appendixref{sec:int-comp-up-covering}.

\section{The Poset of Combinatorial Neural Codes}
\label{sec:poset}
In the following, we provide some background of the poset $\pcode$ of combinatorial neural codes. The reader may also read \appendixref{sec:down-covering} for necessary information.

\begin{definition}[Trunks in neural codes]
\label{def:Tks-in-codes}
A \textbf{trunk} in a neural code $\curC$ is the set $\Tk_\curC(\sigma)$ (possibly empty) defined by
\[
\Tk_\curC(\sigma) = \{\tau\in\curC\mid \sigma\subseteq \tau\}.
\]
for some $\sigma\subseteq[n]$.
\end{definition}

\begin{definition}[Simple trunks and proper trunks]
\label{def:simple-Tks}
A trunk in a neural code $\curC$ of a single neuron $i$, $\Tk_\curC(\{i\})$, is called a \textbf{simple trunk} and simply denoted by $\Tk_\curC(i)$. A trunk in $\curC$ is \textbf{proper} if it is nonempty and not the trunk of $\emptyset$ (i.e., not $\curC$).
\end{definition}

An obvious property of trunks is that the intersection between any two trunks is again a trunk \cite[Proposition 2.2]{morphisms20}. The purpose of the trunks in a neural code is to encode important combinatorial properties of certain groups of codewords, and the mappings which preserve these important combinatorial relations among the codewords in a neural code are called morphisms or neural codes. Intuitively, the notion of trunks in neural codes is analogous to that of open stars in simplicial complexes. Morphisms of neural codes is then in analogy to that of continuous functions between topological spaces: they are ``continuous" with respect to trunks.

\begin{definition}[Morphisms of neural codes]
\label{def:morphisms-of-codes}
A function $f:\curC\to \curD$ is a \textbf{morphism} of neural codes if the preimage $f^{-1}(T)$ of every proper trunk $T$ in $\curD$ is a proper trunk in $\curC$.
\end{definition}

This definition uses proper trunks to define code morphisms rather than just any trunks as in \citep{morphisms20}. As for this adjustment mentioned in \citep[Remark 3.1.2.]{amzithesis}, it ensures several nice properties of neural codes under morphisms. The following, which implies that the image of every neural code in the domain is a subcode of the neural code in the codomain, is an example.

\begin{proposition}
If $f$ is a morphism of neural codes, then $f(\emptyset)=\emptyset$.
\end{proposition}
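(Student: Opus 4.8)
The plan is to argue by contradiction, using the single structural fact that the empty codeword lies in no proper trunk. First I would record this observation: for any $\sigma\subseteq[n]$, the empty codeword $\emptyset$ belongs to $\Tk_\curC(\sigma)$ precisely when $\sigma\subseteq\emptyset$, that is, when $\sigma=\emptyset$; but $\Tk_\curC(\emptyset)=\curC$, which by definition is not a proper trunk. Hence no proper trunk of any code contains the empty codeword. (Here I am using the standing convention that every code contains the $\emptyset$-codeword.)

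Now suppose, toward a contradiction, that $f(\emptyset)\neq\emptyset$, and pick a neuron $i\in f(\emptyset)$. I would then check that the simple trunk $\Tk_\curD(i)$ is proper: it is nonempty since it contains $f(\emptyset)$, and it is not all of $\curD$ since $\emptyset\in\curD$ but $i\notin\emptyset$, so $\emptyset\notin\Tk_\curD(i)$. Because $f$ is a morphism, the preimage $f^{-1}\bigl(\Tk_\curD(i)\bigr)$ is therefore a proper trunk in $\curC$. On the other hand, $i\in f(\emptyset)$ means $f(\emptyset)\in\Tk_\curD(i)$, so $\emptyset\in f^{-1}\bigl(\Tk_\curD(i)\bigr)$. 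Thus $f^{-1}\bigl(\Tk_\curD(i)\bigr)$ is a proper trunk containing the empty codeword, contradicting the observation of the previous paragraph. Hence $f(\emptyset)$ contains no neuron, i.e.\ $f(\emptyset)=\emptyset$.

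There is essentially no obstacle here: once the opening observation is in hand, the argument is a one-line contradiction. The only points to be careful about are the bookkeeping convention that every code contains $\emptyset$, and the fact that the preimage of a proper trunk under a morphism is again a \emph{proper} trunk (not merely a trunk) --- it is the word ``proper'' that is doing the work, since a trunk fails to contain $\emptyset$ exactly when it is proper. Equivalently, one may phrase the contradiction as follows: writing $f^{-1}\bigl(\Tk_\curD(i)\bigr)=\Tk_\curC(\omega)$ for some $\omega\subseteq[n]$, the membership $\emptyset\in\Tk_\curC(\omega)$ forces $\omega=\emptyset$ and hence $f^{-1}\bigl(\Tk_\curD(i)\bigr)=\curC$, which is not proper.
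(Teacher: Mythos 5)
Your proof is correct. The paper defers this statement to an external citation (Proposition 3.1.3 of Jeffs's thesis) rather than proving it inline, but your argument --- no proper trunk contains $\emptyset$, so if $i\in f(\emptyset)$ then $f^{-1}\bigl(\Tk_\curD(i)\bigr)$ would be a proper trunk containing $\emptyset$, a contradiction --- is exactly the standard one and is precisely what the restriction to \emph{proper} trunks in Definition~\ref{def:morphisms-of-codes} is designed to enable.
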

\begin{proof}
See \cite[Proposition 3.1.3]{amzithesis}.
\end{proof}

\begin{definition}
\label{def:defining-morphisms} 
Given some proper trunks $T_1, \dots, T_m$ in $\curC$. Then, the function $f:\curC\to 2^{[m]}$ given by $c\mapsto\{j\in[m]\mid c\in T_j\}$ is called the \textbf{morphism determined by $T_1,\ldots, T_m$}.
\end{definition}

\begin{proposition}
The function described above in Definition \ref{def:defining-morphisms} is a morphism. Moreover, every morphism arises in this way: if $\curD\subseteq 2^{[m]}$ and $f:\curC\to \curD$ is any morphism, then $f$ can be obtained by restricting to $\curD$ the codomain of the morphism $g$ determined by the trunks $T_j = f^{-1}(\Tk_\curD(j))$ in $\curC$.
\end{proposition}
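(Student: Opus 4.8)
The plan is to prove the two assertions in turn, reducing each to a one-line computation at the level of the simple trunks of the codomain.

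For the first assertion I would invoke the standard reduction: every trunk of $2^{[m]}$ is an intersection of simple trunks, $\Tk_{2^{[m]}}(\sigma)=\bigcap_{j\in\sigma}\Tk_{2^{[m]}}(j)$; preimages commute with intersections; and the intersection of two trunks is again a trunk \cite[Proposition 2.2]{morphisms20}. So it suffices to identify $f^{-1}(\Tk_{2^{[m]}}(j))$ for each neuron $j\in[m]$, and unwinding the definition of $f$ shows that $c\in f^{-1}(\Tk_{2^{[m]}}(j))$ if and only if $j\in f(c)$, equivalently $c\in T_j$; that is, $f^{-1}(\Tk_{2^{[m]}}(j))=T_j$, which is a proper trunk of $\curC$ by hypothesis. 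Hence every proper trunk of $2^{[m]}$ pulls back under $f$ to an intersection of the $T_j$, which is a trunk of $\curC$ and is proper whenever it is nonempty, so $f$ is a morphism.

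For the second assertion, let $f\colon\curC\to\curD$ be a morphism with $\curD\subseteq 2^{[m]}$, put $T_j=f^{-1}(\Tk_\curD(j))$ for $j\in[m]$, and let $g\colon\curC\to 2^{[m]}$ be the morphism determined by $T_1,\dots,T_m$ as in Definition~\ref{def:defining-morphisms}; these are proper trunks of $\curC$ by the morphism property, up to the mild normalization noted below. The key point is that $g=f$ pointwise, which follows from the same unwinding as before: for every $c\in\curC$,
\[
g(c)=\{\,j\in[m]\mid c\in T_j\,\}=\{\,j\mid f(c)\in\Tk_\curD(j)\,\}=\{\,j\mid j\in f(c)\,\}=f(c).
\]
Therefore $g(\curC)=f(\curC)\subseteq\curD$, so restricting the codomain of $g$ to $\curD$ yields exactly $f$, as claimed.

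I do not expect a serious obstacle, since the content is essentially bookkeeping; the step I would be most careful about is the treatment of degenerate simple trunks of the codomain. A neuron lying in every codeword of $\curD$ gives $T_j=\curC$, a neuron lying in no codeword gives $T_j=\varnothing$, and the preimage of a proper trunk may itself be empty. I would handle this either by normalizing $\curD$ so that each of its neurons is used in at least one but not all codewords, or by checking directly that both the reduction to simple trunks and the displayed identity $g(c)=f(c)$ persist verbatim in these cases; nothing else in the argument depends on this point.
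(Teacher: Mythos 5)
The paper itself offers no argument here -- its ``proof'' is a citation to \cite[Propositions 2.11, 2.12]{morphisms20} -- so there is nothing in-paper to line up against; your strategy (identify $f^{-1}(\Tk_{2^{[m]}}(j))=T_j$, use that proper trunks of $2^{[m]}$ are intersections of simple ones and that trunks are closed under intersection, and verify $g(c)=f(c)$ pointwise) is the natural one and surely tracks the cited source, and both displayed computations are correct.

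That said, the edge case you flag at the end is not mere bookkeeping, and the way you propose to dispose of it does not actually close the first half of the argument. The issue for Proposition 2.7's first clause is not degeneracy of simple trunks in the \emph{codomain} $2^{[m]}$ (those are all nonempty and proper, since each contains $[m]$ and none contains $\emptyset$), so ``normalizing $\curD$'' is beside the point; the issue is that $f^{-1}\bigl(\Tk_{2^{[m]}}(\sigma)\bigr)=\bigcap_{j\in\sigma}T_j$ can be \emph{empty} even when every $T_j$ is proper. For instance, with $\curC=\{\emptyset,1,2\}$ and $T_1=\Tk_\curC(1)$, $T_2=\Tk_\curC(2)$, the preimage of $\Tk_{2^{[2]}}(\{1,2\})$ is $T_1\cap T_2=\varnothing$, which is not a proper trunk, so under the paper's stated requirement (``the preimage of every proper trunk is a proper trunk'') the map of Definition~\ref{def:defining-morphisms} is literally not a morphism with codomain $2^{[m]}$. ``Checking directly that the argument persists verbatim'' cannot rescue this because the conclusion you want is false as written for that codomain. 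You need to commit to one of two repairs and carry it through: (a) take the codomain to be the image $f(\curC)$, where preimages of nonempty trunks are automatically nonempty, and note they are never all of $\curC$ because $\emptyset\notin T_j$ for proper $T_j$; or (b) observe that, for Proposition~2.5 and everything downstream, the operative requirement on a morphism is that preimages of proper trunks be trunks \emph{other than $\curC$} (possibly empty), under which your intersection argument goes through unchanged. Likewise in the second clause, $T_j=f^{-1}(\Tk_\curD(j))$ is empty whenever $j$ is a trivial neuron of $\curD$, so Definition~\ref{def:defining-morphisms} does not literally apply and you must either delete trivial neurons of $\curD$ first or again adopt reading (b). Until one of these is made explicit, the proof has a real gap rather than a deferred formality.
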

\begin{proof}
See \citep[Propositions 2.11 and 2.12]{morphisms20} .
\end{proof}

\begin{remark}
Sometimes, it will be convenient to talk about the morphism determined by a set of trunks without indexing the trunks. We may do this when we only care about the isomorphism class of a neural code because a choice of indexing simply corresponds to a permutation of neurons in the codomain, which is an isomorphism. 
\end{remark}

In the following, we present the poset $\pcode$ that was first introduced in \citep{morphisms20} and then slightly modified in \citep{amzithesis}. The formulation below has the advantage that we no longer need to consider ``replacement by a trunk" as a possible covering relation, since every nonempty trunk in $\curC$, up to adding the $\varnothing$-codeword $\emptyset$, is a minor of $\curC$ via a surjective morphism (see Proposition \ref{prop:Tks-as-minors}). The formulation below allows us to maintain the previously desirable properties of $\pcode$ while accounting for the usual neural code convention that $\emptyset$ is always a codeword.

\begin{definition}[The poset $\pcode$]
Let $\curC$ and $\curD$ be neural codes. We say that $\curD$ \textbf{is a minor of} $\curC$ (written $\curD\leq\curC$) if there is a surjective morphism $f:\curC\to \curD$. As a consequence, this relation defines a partial order on the set of distinct isomorphism classes of neural codes, denoted by $\pcode$.
\end{definition}

Note that the above definition no longer allows ``replace $\curC$ by a trunk" as a possible covering relation as in \citep{morphisms20} since we require the $\varnothing$-codeword to be in every code. However, the following proposition shows that with a small adjustment we do not lose too much when doing this.

\begin{proposition}\label{prop:Tks-as-minors} Let $T$ be a (possibly empty) trunk in a neural code $\curC$. Then, $\{\emptyset\}\cup T\leq\curC$.
\end{proposition}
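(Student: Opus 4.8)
The plan is to write down an explicit surjective morphism $f\colon\curC\to\{\emptyset\}\cup T$ and then check the morphism axiom directly. Fix $\sigma\subseteq[n]$ with $T=\Tk_\curC(\sigma)$. If $\sigma=\varnothing$ then $T=\curC$, so $\{\emptyset\}\cup T=\curC$ and the identity map is the desired surjective morphism; hence I would assume $\sigma\neq\varnothing$. Since $\emptyset\in\curC$ while $\sigma\not\subseteq\emptyset$, this assumption gives $\emptyset\notin T$, so $\curD:=\{\emptyset\}\cup T$ is a disjoint union and $\curD\setminus\{\emptyset\}=T$. Define $f\colon\curC\to\curD$ by $f(c)=c$ whenever $c\in T$ (equivalently $\sigma\subseteq c$) and $f(c)=\emptyset$ otherwise. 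This map is well defined, fixes $T$ pointwise, and is surjective because $\emptyset\in\curC$ maps to $\emptyset\in\curD$ (in the degenerate case $T=\varnothing$ this just says $\curD=\{\emptyset\}$ and $f$ is the constant map).

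The substance is to verify that $f^{-1}(S)$ is a proper trunk in $\curC$ for every proper trunk $S$ in $\curD$. First I would pin down the proper trunks of $\curD$: writing $S=\Tk_\curD(\rho)$, note that $\emptyset\in S$ would force $\rho\subseteq\emptyset$, hence $\rho=\varnothing$ and $S=\Tk_\curD(\varnothing)=\curD$, contradicting properness; so $\emptyset\notin S$, which gives $S\subseteq\curD\setminus\{\emptyset\}=T$ and therefore $S=\{\tau\in\curC\mid\sigma\cup\rho\subseteq\tau\}=\Tk_\curC(\sigma\cup\rho)$. Since $f$ fixes $T$ pointwise and sends every codeword of $\curC\setminus T$ to $\emptyset\notin S$, we get $f^{-1}(S)=S\cap T=S$. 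This is a trunk in $\curC$; it is nonempty because $S$ is a nonempty trunk in $\curD$; and it is not all of $\curC$ because $\emptyset\in\curC\setminus S$. Hence $f^{-1}(S)$ is a proper trunk, so $f$ is a morphism and $\{\emptyset\}\cup T=\curD\leq\curC$.

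The step I expect to demand the most care is the ``properness'' bookkeeping: one must separately check that $f^{-1}(S)$ is nonempty \emph{and} that it is not the whole code, and it is exactly at the latter point that the reduction $\sigma\neq\varnothing$ (equivalently $\emptyset\notin T$) is used. A more mechanical alternative would be to invoke the earlier characterization that any morphism into $\curD\subseteq 2^{[m]}$ is the one determined by the trunks $f^{-1}(\Tk_\curD(j))$, reducing the task to exhibiting proper trunks of $\curC$ that induce $f$; but this requires fixing an indexing of $\curD$ and, once unwound, amounts to the same computation, so the direct verification seems cleanest.
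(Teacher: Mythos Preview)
Your proof is correct and follows essentially the same approach as the paper: define $f$ to be the identity on $T$ and send everything else to $\emptyset$, then verify that preimages of proper trunks are proper trunks (the paper writes the preimage as $\Tk_\curC(\sigma)\cap T$, which is exactly your $S$). Your version is simply more careful with the edge case $\sigma=\varnothing$ and with the properness bookkeeping, which the paper leaves implicit.
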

\begin{proof}
Consider the function sending $T\subset\curC$ to itself and all other codewords in $\curC$ to $\emptyset$. This function from $\curC$ to $\{\emptyset\}\cup T$ is obviously surjective and a morphism of neural codes because the preimage of any proper trunk $\Tk_T(\sigma)$ in $T$ is equal to $\Tk_\curC(\sigma)\cap T$ which is a proper trunk in $\curC$. 
\end{proof}

\section{Upward Covering Relations in $\pcode$}
\label{sec:up-covering}
In this section, we provide a method to describe all codes that cover a given neural code $\curD$ in $\pcode$ as opposed to \appendixref{sec:down-covering}. To fully understand the development of how we construct covering codes, the reader may read \appendixref{sec:int-comp-up-covering} before this section. In \appendixref{sec:int-comp-up-covering}, we instead described $\curC$ in terms of $\curD$ when they both are intersection-complete. In the following, we extend the idea in \appendixref{sec:int-comp-up-covering} to all neural codes. We start by showing that every covering relation $\curC$ and $\curD$ in $\pcode$ extends in a natural way to a covering relation between their intersection-completions. The hat notation (e.g. $\widehat\curC$) will be used to denote the intersection-completion of any code (e.g. $\curC$). This extension will be a key tool in characterizing all upward covering relations in $\pcode$. 

\begin{lemma}\label{lem:unique-extension}
Let $f:\curC\to \curD$ be a morphism. There exists a unique morphism $g:\widehat \curC\to\widehat \curD$ such that the diagram
\[
\begin{tikzcd}
\curC \arrow[r, hook]\arrow[d,"f",swap]& \widehat \curC\arrow[d,dashed,"g"]\\
\curD \arrow[r,hook] & \widehat \curD
\end{tikzcd}
\]
commutes. Moreover, if $f$ is surjective, then so is $g$.
\end{lemma}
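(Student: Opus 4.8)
The plan is to build $g$ explicitly as the morphism determined by a suitable list of trunks, check it has all the required properties, and then show the choice is forced. Fix representatives with $\curC\subseteq 2^{[n]}$ and $\curD\subseteq 2^{[m]}$ (discarding unused neurons if necessary so that every $\Tk_{\curD}(j)$ is proper), so that $\widehat{\curC}\subseteq 2^{[n]}$ and $\widehat{\curD}\subseteq 2^{[m]}$. By the structure theorem for morphisms (the proposition following Definition~\ref{def:defining-morphisms}), $f$ is the morphism determined by the proper trunks $T_j = f^{-1}(\Tk_{\curD}(j))$ of $\curC$, i.e.\ $f(c)=\{\,j\mid c\in T_j\,\}$. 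The trunks defining $g$ will be natural extensions of the $T_j$ into $\widehat{\curC}$.

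The combinatorial core is the interaction of trunks with intersection-completion. Every element of $\widehat{\curC}$ is an intersection $\bigcap_{i\in I}c_i$ over a nonempty family of codewords $c_i\in\curC$, and $\bigcap_{\ell\in I}c_\ell\subseteq c_i$ for each $i$; hence any trunk $\Tk_{\widehat{\curC}}(\rho)$ that contains $\bigcap_{i\in I}c_i$ also contains each $c_i$. Two consequences: first, for a nonempty trunk $T$ of $\curC$ with canonical generator $\sigma_T=\bigcap_{\tau\in T}\tau$ (so that $T=\Tk_{\curC}(\sigma_T)$), one has $\Tk_{\widehat{\curC}}(\sigma_T)\cap\curC=T$, and $\Tk_{\widehat{\curC}}(\sigma_T)$ is a proper trunk of $\widehat{\curC}$ whenever $T$ is proper in $\curC$ (it contains $T$, so it is nonempty, and it omits every $c\in\curC\setminus T$, so it is not all of $\widehat{\curC}$); second, any nonempty trunk $S$ of $\widehat{\curC}$ with $S\cap\curC\neq\varnothing$ is recovered from its trace as $S=\Tk_{\widehat{\curC}}\bigl(\bigcap_{\tau\in S\cap\curC}\tau\bigr)$, because writing $S=\Tk_{\widehat{\curC}}(\rho)$, any $\hat c=\bigcap_i c_i\in S$ forces every $c_i\in S\cap\curC$, so $\bigcap_{\tau\in S\cap\curC}\tau\subseteq\hat c$, and the reverse inclusion of trunks is immediate. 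In short, a nonempty trunk of $\widehat{\curC}$ that meets $\curC$ is determined by its intersection with $\curC$.

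Now set $\sigma_j=\bigcap_{\tau\in T_j}\tau$ and $S_j=\Tk_{\widehat{\curC}}(\sigma_j)$ (proper trunks of $\widehat{\curC}$ by the above), and let $g\colon\widehat{\curC}\to 2^{[m]}$ be the morphism determined by $S_1,\dots,S_m$, so $g(\hat c)=\{\,j\mid\hat c\in S_j\,\}$. Then: (i) the square commutes, since $S_j\cap\curC=T_j$ gives $g(c)=\{\,j\mid c\in T_j\,\}=f(c)$ for $c\in\curC$; (ii) $g$ maps into $\widehat{\curD}$, because the membership criterion above gives $g\bigl(\bigcap_{i\in I}c_i\bigr)=\bigcap_{i\in I}f(c_i)$, an intersection of codewords of $\curD$ and hence a codeword of $\widehat{\curD}$, so restricting the codomain (which preserves being a morphism) yields $g\colon\widehat{\curC}\to\widehat{\curD}$; (iii) if $f$ is surjective then so is $g$, since given $\hat d=\bigcap_{k\in K}d_k\in\widehat{\curD}$ one picks $c_k\in\curC$ with $f(c_k)=d_k$ and gets $g\bigl(\bigcap_{k\in K}c_k\bigr)=\bigcap_{k\in K}d_k=\hat d$. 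For uniqueness, if $g'\colon\widehat{\curC}\to\widehat{\curD}$ also makes the square commute, then its defining trunks $S_j'=(g')^{-1}(\Tk_{\widehat{\curD}}(j))$ are proper trunks of $\widehat{\curC}$ with $S_j'\cap\curC=T_j\neq\varnothing$, so by the ``trace determines the trunk'' statement $S_j'=\Tk_{\widehat{\curC}}(\sigma_j)=S_j$ for all $j$, whence $g'=g$.

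The step I expect to be the genuine obstacle — and the one doing the real work — is the combinatorial lemma of the second paragraph, in particular the claim that a trunk of $\widehat{\curC}$ meeting $\curC$ is pinned down by that trace; with it in hand, both the well-definedness of the $S_j$ and the uniqueness of $g$ become immediate and everything else is bookkeeping. Minor care is needed with the empty-intersection convention for $\widehat{(-)}$ (always writing elements of $\widehat{\curC}$ as intersections over \emph{nonempty} index sets) and with the routine fact that restricting the codomain of a morphism to a subcode containing its image is again a morphism.
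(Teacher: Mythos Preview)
Your proof is correct and follows essentially the same route as the paper's: define $g$ via the trunks $S_j$ obtained by extending each $T_j$ to the smallest trunk in $\widehat\curC$ containing it (your explicit formula $S_j=\Tk_{\widehat\curC}\bigl(\bigcap_{\tau\in T_j}\tau\bigr)$ is exactly this), then use the fact that $g$ respects intersections to land in $\widehat\curD$, obtain surjectivity, and establish uniqueness. The only cosmetic difference is packaging: the paper cites its Proposition~\ref{prop:image-of-intersection} for the identity $g\bigl(\bigcap_i c_i\bigr)=\bigcap_i f(c_i)$ and for uniqueness, whereas you isolate and prove the equivalent ``trace determines the trunk'' lemma explicitly, which makes the uniqueness argument slightly more transparent.
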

\begin{proof}
First, let us prove that the morphism $g$ exists. We know that $f$ is determined by some collection of trunks $\{T_1,\ldots, T_m\}$ in $\curC$. Each $T_j$ can be naturally associated with the smallest trunk (in terms of inclusion) $S_j$ in $\widehat \curC$ that contains it by setting $S_j$ to be the intersection of all trunks containing $T_j$ in $\widehat\curC$. The collection $\{S_1,\ldots, S_m\}$ in $\widehat \curC$ determines a morphism $h:\widehat \curC\to 2^{[m]}$. Observe that if $c\in \curC$, then $f(c) = h(c)$, so $h$ extends the morphism $f$ to $\widehat \curC$ while also extending the codomain to $2^{[m]}$. By Proposition \ref{prop:image-of-intersection} (\appendixref{sec:int-comp-up-covering}), the image of $\widehat \curC$ under $h$ is a subset of $\widehat \curD$. Thus, we can restrict $h$ to a morphism $g:\widehat\curC\to\widehat\curD$. Moreover, Proposition \ref{prop:image-of-intersection} (\appendixref{sec:int-comp-up-covering}) implies that the action of $g$ on a codeword in $\widehat \curC$ is uniquely determined by its action on $\curC$, and so the choice of $g$ is unique.

For the surjectivity statement, suppose that $f$ is surjective and consider an arbitrary codeword in $\widehat \curD$. Since $f$ is surjective, this codeword is an intersection of various $f(c)$. This implies that it is the intersection of various $g(c)$, and by Proposition \ref{prop:image-of-intersection} (\appendixref{sec:int-comp-up-covering}), this means that it is the image under $g$ of some codeword in $\widehat \curC$, so $g$ is surjective as desired.
\end{proof}

Such a diagram can help us recognize when codes and their intersection completions cover one another, as described in the lemma below.

\begin{lemma}\label{lem:extension-pullback}
Let $f:\curC\to \curD$ be a surjective morphism, and $g:\widehat \curC\to \widehat \curD$ be the unique surjective morphism so that the diagram \[
\begin{tikzcd}
\curC \arrow[r, hook]\arrow[d,twoheadrightarrow,"f",swap]& \widehat \curC\arrow[d,twoheadrightarrow,"g"]\\
\curD \arrow[r,hook] & \widehat \curD
\end{tikzcd}
\]
commutes. Then $\widehat \curC$ covers $\widehat \curD$ if and only if $\curC$ covers $\curD$. 
\end{lemma}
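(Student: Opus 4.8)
The plan is to prove the two implications separately. Before either, I would record two facts. The first is that intersection-completion is well defined on isomorphism classes: applying the uniqueness clause of Lemma~\ref{lem:unique-extension} to an isomorphism and to its inverse shows the two induced extensions are mutually inverse. The second---the structural input for the ``if'' direction---is that a minor of an intersection-complete code is again intersection-complete: if $\curC$ is intersection-complete and $p\colon\curC\to\curE$ is surjective, then $\widehat\curC=\curC$, the commuting square of Lemma~\ref{lem:unique-extension} forces the extension $\widehat\curC\to\widehat\curE$ to be $p$ itself with codomain merely enlarged, and since that extension is surjective its image $\curE$ must be all of $\widehat\curE$.

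For ``$\curC$ covers $\curD$ implies $\widehat\curC$ covers $\widehat\curD$'', I would take any $\curE$ with $\widehat\curD\le\curE\le\widehat\curC$, witnessed by surjections $p\colon\widehat\curC\to\curE$ and $q\colon\curE\to\widehat\curD$ with $qp=g$, and show $\curE\cong\widehat\curD$ or $\curE\cong\widehat\curC$. By the fact above $\curE$ is intersection-complete. Set $\curE_0:=p(\curC)\subseteq\curE$; precomposing $p$ with the inclusion $\curC\hookrightarrow\widehat\curC$ (a morphism) exhibits $\curE_0$ as a minor of $\curC$, and Proposition~\ref{prop:image-of-intersection} applied to $p$ gives $\curE=p(\widehat\curC)\subseteq\widehat{p(\curC)}=\widehat{\curE_0}$, which together with $\widehat{\curE_0}\subseteq\widehat\curE=\curE$ yields $\curE=\widehat{\curE_0}$. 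Hence the inclusion $\curE_0\hookrightarrow\curE=\widehat{\curE_0}$ is a morphism, so composing it with $q$ exhibits $\curD=g(\curC)=q(\curE_0)$ as a minor of $\curE_0$. Thus $\curD\le\curE_0\le\curC$, so $\curE_0\cong\curD$ or $\curE_0\cong\curC$ since $\curC$ covers $\curD$, and applying intersection-completion gives $\curE=\widehat{\curE_0}\cong\widehat\curD$ or $\widehat\curC$.

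For ``$\widehat\curC$ covers $\widehat\curD$ implies $\curC$ covers $\curD$'', I would take any $\curE$ with $\curD\le\curE\le\curC$, witnessed by surjections $a\colon\curC\to\curE$ and $b\colon\curE\to\curD$ with $ba=f$. Applying Lemma~\ref{lem:unique-extension} to $a$ and $b$, and using uniqueness of the extension of $f$ to see the extensions compose to $g$, I obtain surjections $\hat a\colon\widehat\curC\to\widehat\curE$ and $\hat b\colon\widehat\curE\to\widehat\curD$, so $\widehat\curD\le\widehat\curE\le\widehat\curC$ and hence $\widehat\curE\cong\widehat\curC$ or $\widehat\curE\cong\widehat\curD$. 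If $\widehat\curE\cong\widehat\curC$, then $\hat a$ is a surjection between isomorphic finite codes, hence a bijection, and since its domain $\widehat\curC$ is intersection-complete I would argue it is an isomorphism; restricting this isomorphism to the corresponding subcodes $\curC$ and $\hat a(\curC)=\curE$ then gives $\curC\cong\curE$. The case $\widehat\curE\cong\widehat\curD$ is symmetric via $\hat b$, giving $\curE\cong\curD$. Either way $\curE$ is $\curC$ or $\curD$ up to isomorphism, so $\curC$ covers $\curD$.

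The step I expect to be the main obstacle is the one just invoked: that a bijective morphism out of an intersection-complete code is an isomorphism---this is what lets me pass from $\widehat\curE\cong\widehat\curC$ back to $\curE\cong\curC$ (and likewise on the $\curD$ side), and it genuinely fails without intersection-completeness since non-isomorphic codes routinely share an intersection completion. Proving it will require writing such a morphism as the one determined by trunks $T_1,\dots,T_m$ that separate codewords and showing every simple trunk of the domain is an intersection of the $T_j$; I anticipate doing this by using closure under intersection to recover each simple trunk from the minimum codeword of its trunk. The only other nonformal ingredient, the containment $p(\widehat\curC)\subseteq\widehat{p(\curC)}$, is exactly Proposition~\ref{prop:image-of-intersection}, and the rest (inclusions into intersection completions and codomain restrictions are morphisms, restrictions of isomorphisms to corresponding subcodes are isomorphisms) is routine bookkeeping.
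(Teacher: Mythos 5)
Your proof takes a genuinely different route from the paper's. The paper's argument is a short counting argument: it notes (via Proposition~\ref{prop:trunks-elems-bijection}) that nonempty trunks of $\curC$ are in bijection with elements of $\widehat\curC$, so $\curC$ and $\widehat\curC$ have the same number of nonempty trunks, and then strings together \corollaryref{cor:covering-criterion} applied to $f$ and to $g$ with this bijection to get a five-step chain of equivalences. Your argument is instead a direct structural verification of the covering condition (``nothing strictly between''), supported by three auxiliary facts (functoriality of intersection-completion, minors of intersection-complete codes are intersection-complete, bijective morphisms from intersection-complete codes are isomorphisms). That last fact is real and provable roughly as you anticipate (and, combined with \corollaryref{cor:samenumbertrunks}, is what makes the restriction step $\widehat a\cong\text{id}\Rightarrow a\cong\text{id}$ work), so the second direction is essentially sound; the clause ``using uniqueness of the extension of $f$ to see the extensions compose to $g$'' is harmless but unnecessary there, since you only use the existence of $\widehat a,\widehat b$.

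The first direction, however, has a genuine gap. You write ``take any $\curE$ with $\widehat\curD\le\curE\le\widehat\curC$, witnessed by surjections $p\colon\widehat\curC\to\curE$ and $q\colon\curE\to\widehat\curD$ with $qp=g$.'' The relation $\widehat\curD\le\curE\le\widehat\curC$ only asserts the existence of \emph{some} surjections $p$ and $q$; it does not let you choose them so that $qp$ equals the particular morphism $g$ from \lemmaref{lem:unique-extension} (the poset is on isomorphism classes, and the witnessing morphisms are not part of the data and need not interact with $f$ or $g$). Your argument uses $qp=g$ crucially at the step ``$\curD=g(\curC)=q(\curE_0)$,'' which gives $\curD\le\curE_0\le\curC$ and hence lets you invoke that $\curC$ covers $\curD$. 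Without $qp=g$, $q(\curE_0)=q(p(\curC))$ is just some subcode of $\widehat\curD$ and there is no reason it equals $\curD$ or sits above it. One can patch this by reverting to a trunk count: from $\curE=\widehat{\curE_0}$ and $\curE_0\le\curC$ one gets $\#\{\text{nonempty trunks of }\curE_0\}\le\#\{\text{nonempty trunks of }\curC\}$, while the surjection $q$ forces $|\curE|\ge|\widehat\curD|$, and $|\widehat\curC|=|\widehat\curD|+1$ because $\curC$ covers $\curD$ — but at that point one has essentially reproduced the paper's argument via Proposition~\ref{prop:trunks-elems-bijection} and \corollaryref{cor:covering-criterion}, so the ``pure structural'' version of this direction doesn't quite close. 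Finally, a small omission in both directions: you should also check the strict part of ``covers'' (that $\widehat\curC\not\cong\widehat\curD$, resp.\ $\curC\not\cong\curD$); this does follow from your Fact 1 together with \corollaryref{cor:samenumbertrunks}, but it should be said.
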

\begin{proof}
This follows from an application of Proposition \ref{prop:trunks-elems-bijection} (\appendixref{sec:int-comp-up-covering}) and \corollaryref{cor:covering-criterion} (\appendixref{sec:down-covering}). Observe that nonempty trunks in $\curC$ are in bijection with elements of $\widehat \curC$ via the map which sends a nonempty trunk to the intersection of all its elements. Thus the following statements are equivalent:\begin{itemize}
\item[(i)] $\widehat \curC$ covers $\widehat \curD$,
\item[(ii)] $\widehat\curC$ has one more nonempty trunk than $\widehat\curD$,
\item[(iii)] $\widehat \curC$ has one more element than $\widehat \curD$,
\item[(iv)] $\curC$ has  one more nonempty trunk than $\curD$, and
\item[(v)] $\curC$ covers $\curD$. 
\end{itemize}
\end{proof}

\begin{theorem}\label{thm:from-below-abstract}
Let $\curC$ and $\curD$ be codes and suppose that $\curC$ covers $\curD$. Then $\widehat \curC$ is isomorphic to $\widehat \curD_{[\curI]}$ (see Definition \ref{def:int-comp-covering}, \appendixref{sec:int-comp-up-covering}) for some choice of isolated subset $\curI\subseteq \widehat\curD$.
\end{theorem}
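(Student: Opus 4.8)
The plan is to reduce the statement to the intersection-complete setting, where covering relations have already been classified in \appendixref{sec:int-comp-up-covering}, and to pass between the two settings using the two extension lemmas just established.

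First I would unpack the hypothesis. Since $\curC$ covers $\curD$ we have in particular $\curD\leq\curC$, so there is a surjective morphism $f\colon\curC\to\curD$. Feeding $f$ into Lemma \ref{lem:unique-extension} produces the unique surjective morphism $g\colon\widehat\curC\to\widehat\curD$ fitting into the commuting square with the two inclusions, and Lemma \ref{lem:extension-pullback} then upgrades ``$\curC$ covers $\curD$'' to ``$\widehat\curC$ covers $\widehat\curD$'' in $\pcode$.

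Next, $\widehat\curC$ and $\widehat\curD$ are both intersection-complete, essentially by the definition of the intersection-completion. So I would invoke the classification of covering relations among intersection-complete codes from \appendixref{sec:int-comp-up-covering}: it describes the larger of two such codes, under a covering relation, as the operation $(\,\cdot\,)_{[\curI]}$ applied to the smaller one, for a suitable isolated subset $\curI$ (cf. Definition \ref{def:int-comp-covering}). Applying that result with $\widehat\curC$ on top and $\widehat\curD$ on the bottom hands us an isolated subset $\curI\subseteq\widehat\curD$ with $\widehat\curC\cong\widehat\curD_{[\curI]}$, which is precisely the assertion.

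The bookkeeping above is routine; the substance sits in the two ingredients being cited. Lemma \ref{lem:extension-pullback} is what makes the reduction sound: a covering relation in $\pcode$ asserts that \emph{no} code lies strictly between $\curD$ and $\curC$, and one must check that this survives the passage to intersection-completions; the lemma achieves this by translating covering into a single-element difference between the completions (via the trunk--element bijection and the covering criterion). The deeper work is packaged inside the operation $\curD_{[\curI]}$: absent \appendixref{sec:int-comp-up-covering}, the hard part would be to pin down which subset $\curI$ of codewords of $\widehat\curD$ accounts for the one extra codeword that $\widehat\curC$ carries over $\widehat\curD$, to verify that $\curI$ is isolated in the required sense, and to confirm that $\widehat\curD_{[\curI]}$ is again intersection-complete and isomorphic to $\widehat\curC$. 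With the appendix available, Theorem \ref{thm:from-below-abstract} is a short corollary of results already in place.
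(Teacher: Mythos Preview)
Your proposal is correct and follows essentially the same approach as the paper: use Lemma~\ref{lem:unique-extension} and Lemma~\ref{lem:extension-pullback} to pass from ``$\curC$ covers $\curD$'' to ``$\widehat\curC$ covers $\widehat\curD$'', then invoke the intersection-complete classification (Theorem~\ref{thm:int-comp-covering}) to obtain $\widehat\curC\cong\widehat\curD_{[\curI]}$. The paper's proof is just a terser version of what you wrote.
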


\begin{proof}
If $\curC$ covers $\curD$, we may construct the diagram in \lemmaref{lem:extension-pullback}, and conclude that $\widehat\curC$ covers $\widehat \curD$. The rest of the result follows from \theoremref{thm:int-comp-covering} (\appendixref{sec:int-comp-up-covering}).
\end{proof}

\begin{corollary}
Suppose that $\curC$ covers $\curD$. Then $\widehat \curC$ covers $\widehat \curD$.
\end{corollary}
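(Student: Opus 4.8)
The plan is to read this off directly from the two lemmas just proved, with no new work. First I would observe that since $\curC$ covers $\curD$ in $\pcode$, in particular $\curD \leq \curC$, so by definition of the order there is a \emph{surjective} morphism $f \colon \curC \to \curD$. Applying \lemmaref{lem:unique-extension} to this $f$ produces the unique morphism $g \colon \widehat\curC \to \widehat\curD$ making the naturality square commute, and the last sentence of that lemma guarantees that $g$ is again surjective because $f$ is. Hence all four maps assemble into exactly the commutative diagram appearing in \lemmaref{lem:extension-pullback}, with both vertical arrows surjective.

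Then I would simply invoke \lemmaref{lem:extension-pullback} itself: for that diagram, $\widehat\curC$ covers $\widehat\curD$ if and only if $\curC$ covers $\curD$. The right-hand condition holds by hypothesis, so the left-hand condition holds as well, which is precisely the assertion of the corollary.

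I do not expect a genuine obstacle here; the statement is a bona fide corollary, and in fact the conclusion ``$\widehat\curC$ covers $\widehat\curD$'' already occurs verbatim inside the proof of \theoremref{thm:from-below-abstract}. The one point worth being careful about is that we are entitled to form the diagram with \emph{surjective} vertical maps, which is why the argument must cite both halves of \lemmaref{lem:unique-extension} — existence and uniqueness of $g$, \emph{and} the preservation-of-surjectivity clause — before appealing to \lemmaref{lem:extension-pullback}. As an alternative packaging one could instead quote \theoremref{thm:from-below-abstract} to obtain $\widehat\curC \cong \widehat\curD_{[\curI]}$ for some isolated subset $\curI \subseteq \widehat\curD$, and then note that $\widehat\curD_{[\curI]}$ covers $\widehat\curD$ by the relevant result in \appendixref{sec:int-comp-up-covering}; but the two-lemma route above is shorter and keeps the proof self-contained within \sectionref{sec:up-covering}.
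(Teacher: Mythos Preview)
Your proposal is correct and matches the paper's own reasoning: the paper states this corollary without a separate proof, since the conclusion is already obtained verbatim in the first sentence of the proof of \theoremref{thm:from-below-abstract} by constructing the diagram of \lemmaref{lem:extension-pullback} (via \lemmaref{lem:unique-extension}) and reading off the equivalence. Your explicit two-lemma packaging is exactly that argument spelled out.
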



The following definitions provide an important building block of constructing covering codes and a list of all codes that can cover a given code $\curD$, respectively. We will prove the latter is true in Theorem \ref{thm:covering-code-char}.

\begin{definition}\label{def:isolated-subcode}
Let $\mathcal{C}$ be an intersection-complete code, $\mathcal{I}\subseteq\mathcal{C}$ its nonempty intersection-complete subset. Then, $\mathcal{I}$ is \textbf{isolated} if no codeword in $\curC\setminus\curI$ contains any non-minimal codeword in $\curI$; i.e., $\sigma\not\supset\tau$ for every $\sigma\in\mathcal{C}\setminus\mathcal{I}$ and every $\tau\in\mathcal{I}\setminus\{\mu\}$ where $\mu$ is the minimal element of $\mathcal{I}$.
\end{definition}
Note that any singleton set is an isolated subset, and so is any nonempty trunk in intersection-complete codes.


\begin{definition}\label{def:all-covering}
Let $\curD$ be any neural code, and $\curI\subseteq \widehat\curD$ an isolated subset with minimal element $\mu$. We define four types of covering code for $\curD$, subject to certain conditions on $\curI$. We summarize these conditions and the resulting codes in \tableref{tab:covering-code-construction}. The notation $(\cdot)_\alpha$ denotes the action of adding a new neuron $\alpha$ to every codeword in the argument (e.g. $(S)_\alpha=\{c\cup\alpha\,|\,c\in S\}$).
\end{definition}  

\begin{table}[h]
        \centering
        \small       
        \begin{tabular}{|c|l|l|l|}
            \hline
            \textbf{Type} & \textbf{Conditions} & \textbf{Construction} \\
            \hline
            1 & $\mu\in\curD$ & $\curD_{[\curI]}=(\curD\cap\curI)_{\alpha}\cup\curD\setminus\curI\cup\{\mu\}$ \\
            \hline
            2 & $\mu\in\curD$ and $\Tk_\curD(\mu)\setminus\curI\neq\varnothing$ &  $\curD_{(\curI]}=(\curD\cap\curI)_{\alpha}\cup\curD\setminus\curI$  \\
            \hline
            3 &  $\mu\in\curD$ and $\mu=\bigcap\limits_{\sigma\in\curD\cap\curI\setminus\{\mu\}}\sigma$ &  $\curD_{[\curI)}=(\curD\cap\curI\setminus\{\mu\})_{\alpha}\cup\curD\setminus\curI\cup\{\mu\}$ \\
            \hline
            4 & $\mu\notin\curD$, $\mu=\bigcap\limits_{\sigma\in\curD\cap\curI\setminus\{\mu\}}\sigma$, and $\Tk_\curD(\mu)\setminus\curI\neq\varnothing$ & $\curD_{(\curI)}=(\curD\cap\curI)_{\alpha}\cup\curD\setminus\curI$ \\
            \hline
        \end{tabular}
        \caption{Constructions of covering codes, subject to the conditions on an isolated subset.} 
        \label{tab:covering-code-construction}
\end{table}  



Note that the conditions in \tableref{tab:covering-code-construction} are not mutually exclusive. Thus for a given choice of $\curI$ we may be able to form multiple covering codes. Besides, when $\curC$ is intersection-complete, the notation above is compatible with \definitionref{def:int-comp-covering} (\appendixref{sec:int-comp-up-covering}). We give several results below, which eventually show that the codes above are exactly those covering $\curD$ in $\pcode$. Our first result below says that the constructions in Definition \ref{def:all-covering} do not differ too much: when we apply the construction and compute the intersection-completion of the resulting code, we will always obtain the covering code $\widehat\curD_{[\curI]}$ of Definition \ref{def:int-comp-covering} (\appendixref{sec:int-comp-up-covering}).

\begin{theorem}\label{thm:int-completion}
Let $\curD$ be a code, $\curI\subseteq \widehat\curD$ an isolated subset with the minimal codeword $\mu$, and $\curC$ one of the covering codes described in Definition \ref{def:all-covering}. Then $\widehat\curC = \widehat\curD_{[\curI]}$.
\end{theorem}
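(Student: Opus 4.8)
The plan is to prove the identity $\widehat\curC=\widehat\curD_{[\curI]}$ by two inclusions, after recording that, since $\curI\subseteq\widehat\curD$ and $\widehat\curD$ is intersection-complete, \definitionref{def:int-comp-covering} specializes to $\widehat\curD_{[\curI]}=(\curI)_\alpha\cup(\widehat\curD\setminus\curI)\cup\{\mu\}$, and that this code is again intersection-complete (a short verification from isolatedness of $\curI$, also recorded in \appendixref{sec:int-comp-up-covering}). For the inclusion $\widehat\curC\subseteq\widehat\curD_{[\curI]}$ it is enough to verify $\curC\subseteq\widehat\curD_{[\curI]}$, which is immediate from \tableref{tab:covering-code-construction}: every augmented codeword $c\cup\alpha$ occurring in $\curC$ has $c\in\curI$, so it lies in $(\curI)_\alpha$; and every un-augmented codeword of $\curC$ lies in $\curD\setminus\curI\subseteq\widehat\curD\setminus\curI$ or is $\mu$. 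Since $\widehat\curD_{[\curI]}$ is intersection-complete, $\curC\subseteq\widehat\curD_{[\curI]}$ upgrades to $\widehat\curC\subseteq\widehat\curD_{[\curI]}$.

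The substance is the reverse inclusion $\widehat\curD_{[\curI]}\subseteq\widehat\curC$, which I would establish by writing each of the three kinds of codeword of $\widehat\curD_{[\curI]}$ as an intersection of codewords of $\curC$. The recurring facts are that $\alpha$ is a genuinely new neuron — so the codewords of $\curC$ containing $\alpha$ are exactly the augmented ones — and that $\curI$ is intersection-complete with least element $\mu$. First, for $\sigma\in\widehat\curD\setminus\curI$, write $\sigma=\bigcap_{c\in\mathcal{A}}c$ with $\varnothing\neq\mathcal{A}\subseteq\curD$, and replace each $c\in\mathcal{A}$ lying in $\curI$ by a codeword of $\curC$ with the same restriction to $[n]$: this is $c\cup\alpha$, except in Type 3 with $c=\mu$, where one uses $\mu\in\curC$. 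The intersection of the replaced family equals $\sigma$, together with $\alpha$ only if every factor carries $\alpha$; but that forces every $c\in\mathcal{A}$ into $\curI$, whence $\sigma=\bigcap_{c\in\mathcal{A}}c\in\curI$ by intersection-completeness of $\curI$, contradicting $\sigma\notin\curI$. So the intersection is exactly $\sigma$ and $\sigma\in\widehat\curC$.

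Second, for a codeword $\tau\cup\alpha$ with $\tau\in\curI$, split on whether $\tau=\mu$. If $\tau\neq\mu$, write $\tau=\bigcap_{c\in\mathcal{A}}c$ with $\varnothing\neq\mathcal{A}\subseteq\curD$; isolatedness forces each $c\in\mathcal{A}$ into $\curI$ (a $c\in\widehat\curD\setminus\curI$ with $c\supseteq\tau$ and $\tau\in\curI\setminus\{\mu\}$ would violate isolatedness, so $c=\tau\in\curI$), and $\mu\notin\mathcal{A}$ (else $\tau\subseteq\mu$ forces $\tau=\mu$), so $\mathcal{A}\subseteq\curD\cap\curI\setminus\{\mu\}$, each $c\cup\alpha$ lies in $\curC$, and $\tau\cup\alpha=\bigcap_{c\in\mathcal{A}}(c\cup\alpha)\in\widehat\curC$. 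If $\tau=\mu$, then $\mu\cup\alpha\in\curC$ outright in Types 1 and 2 (as $\mu\in\curD\cap\curI$), while in Types 3 and 4 the hypothesis $\mu=\bigcap_{\sigma\in\curD\cap\curI\setminus\{\mu\}}\sigma$ gives $\mu\cup\alpha=\bigcap_{\sigma\in\curD\cap\curI\setminus\{\mu\}}(\sigma\cup\alpha)\in\widehat\curC$ (the index set being nonempty whenever that hypothesis is in force). Third, for the codeword $\mu$: it lies in $\curC$ directly in Types 1 and 3; in Types 2 and 4 one picks $\rho\in\Tk_\curD(\mu)\setminus\curI$ (nonempty by hypothesis), notes $\rho\in\curD\setminus\curI\subseteq\curC$, and obtains $\mu=\rho\cap(\mu\cup\alpha)\in\widehat\curC$ since $\mu\subseteq\rho$, $\alpha\notin\rho$, and $\mu\cup\alpha\in\widehat\curC$ by the preceding step.

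I expect the main obstacle to be organizing the case analysis rather than any isolated difficulty: the construction is uniform except exactly at the two codewords $\mu\cup\alpha$ and $\mu$, and it is there that the four different side-conditions of \tableref{tab:covering-code-construction} — $\mu\in\curD$, $\Tk_\curD(\mu)\setminus\curI\neq\varnothing$, and $\mu=\bigcap_{\sigma\in\curD\cap\curI\setminus\{\mu\}}\sigma$ — are consumed, so one must check that each hypothesis is invoked precisely in the type where it is available. A secondary point meriting care is the degenerate case $\curI=\{\mu\}$, where the Type 3 and 4 intersection hypotheses are either vacuous (empty intersection) or make the construction coincide with Type 1 or 2; the argument above should be read with that caveat in mind.
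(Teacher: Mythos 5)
Your proof is correct and follows essentially the same route as the paper's: establish $\curC\subseteq\widehat\curD_{[\curI]}$ and upgrade by intersection-completeness, then realize each codeword of $\widehat\curD_{[\curI]}$ as an intersection of codewords of $\curC$, deferring the two exceptional codewords $\mu$ and $\mu\cup\alpha$ to a case analysis that consumes the type-specific side conditions of \tableref{tab:covering-code-construction}. Your version is somewhat more explicit than the paper's (e.g.\ invoking intersection-completeness of $\curI$ to justify that a codeword of $\widehat\curD\setminus\curI$ has a factor outside $\curI$, and flagging the degenerate case $\curI=\{\mu\}$), but these are presentational refinements, not a different argument.
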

\begin{proof}
In every type of covering code in \definitionref{def:all-covering}, we see that the new neuron $\alpha$ is only added to elements of $\curD\cap \curI$ (possibly, except for $\mu$). Comparing this to \definitionref{def:int-comp-covering} (\appendixref{sec:int-comp-up-covering}), we see that $\curC\subseteq \widehat\curD_{[\curI]}$. Since $\widehat \curD_{[\curI]}$ is intersection-complete, this immediately implies that $\widehat\curC\subseteq \widehat\curD_{[\curI]}$. 

It remains to prove the reverse inclusion. Let $c\in\widehat \curD_{[\curI]}$. We must show that $c$ is an element or an intersection of elements of $\curC$. First, suppose that $c$ contains $\alpha$ and is not $\mu\cup\alpha$. Then $c\setminus\alpha$ is an element of $\curI$ and equal to an intersection of some codewords in $\curD$. If any of the codewords whose intersection is $c\setminus\alpha$ is not an element of $\curI$, then \definitionref{def:isolated-subcode} implies that $c\setminus\alpha = \mu$, a contradiction. Therefore, $c\setminus\alpha$ is an intersection of codewords in $(\curD\cap \curI)\setminus \{\mu\}$. In every type of covering code, we add $\alpha$ to all such codewords, and hence $c$ is an intersection of some codewords in $\curC$ as desired. Next, suppose that $c$ does not contain $\alpha$ and is not $\mu$. Then $c$ is an intersection of some codewords in $\curD$, at least one of which is not an element of $\curD\cap\curI$. When we form $\curC$, we add $\alpha$ to all codewords in $\curD\cap\curI$ (possibly, except $\mu$). Since at least one of the codewords whose intersection is $c$ is not in $\curD\cap \curI$, the intersection of these codewords in $\curC$ will still be $c$. This case is concluded. This leaves the following two cases: $c = \mu$ and $c = \mu\cup\alpha$. We will show that both $\mu$ and $\mu\cup\alpha$ are codewords in $\widehat\curC$, regardless of covering type $\curC$ is. Each case is considered as follows.
\begin{itemize}
\item[] \textbf{Type 1:} Here the definition stipulates that $\mu$ stays in $\curC$. Since we add $\alpha$ to all elements of $\curD\cap \curI$, we also obtain $\mu\cup\alpha$ as a codeword in $\curC$. 
\item[] \textbf{Type 2:} Since we assume $\mu\in \curD$, this implies $\mu\in \curD\cap\curI$, so $\mu\cup\alpha$ is a codeword in $\curC$. Since $\Tk_\curD(\mu)\setminus \curI$ is nonempty, there is a codeword $c'\in \curC$ which contains $\mu$ but not $\alpha$. This leads to the fact that $c'\cap (\mu\cup\alpha) = \mu$ is a codeword in $\widehat \curC$. 
\item[] \textbf{Type 3:} The definition stipulates that we include $\mu$ as a codeword in $\curC$. We also assume that $\mu$ is the intersection of all elements in $(\curD\cap \curI)\setminus \{\mu\}$, which are exactly the codewords added with $\alpha$. Thus, $\mu\cup\alpha$ is the intersection of all codewords in $\Tk_{\curC}(\alpha)$ and lies in $\widehat\curC$.
\item[] \textbf{Type 4:} Here neither $\mu$ nor $\mu\cup\alpha$ are codewords in $\curC$. However, similarly to Type 2, the condition $\Tk_\curD(\mu)\setminus \curI$ is nonempty will lead to the existence of $\mu$ in $\widehat\curC$. Besides, similarly to Type 3, the assumption that $\mu$ is the intersection of all elements in $(\curD\cap \curI)\setminus \{\mu\}$ will lead to the existence of $\mu\cup\alpha$ in $\widehat\curC$. 
\end{itemize}

We have shown in all cases that every $c\in \widehat \curD_{[\curI]}$ is a codeword in $\widehat\curC$. The result then follows.
\end{proof}

\begin{corollary}\label{cor:all-types-cover}
Every covering code $\curC$ described in \definitionref{def:all-covering} covers $\curD$ in $\pcode$.
\end{corollary}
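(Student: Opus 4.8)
The plan is to deduce the statement from the corresponding fact about intersection-completions, using \lemmaref{lem:extension-pullback} as the bridge and \theoremref{thm:int-completion} to identify $\widehat\curC$. The first step is to produce, for each of the four constructions of \definitionref{def:all-covering}, a surjective morphism $f:\curC\to\curD$, so that $\curD\le\curC$ and \lemmaref{lem:extension-pullback} becomes applicable. The natural candidate is the morphism that ``deletes the new neuron $\alpha$'': the morphism determined by the (proper) simple trunks $\Tk_\curC(i)$, as $i$ ranges over the neurons of $\curD$, which acts on codewords by $c\mapsto c\setminus\alpha$. Inspecting \tableref{tab:covering-code-construction} type by type, one checks that erasing $\alpha$ from $\curC$ returns precisely $\curD$: in each type the blocks $(\curD\cap\curI)_\alpha$ and $(\curD\cap\curI\setminus\{\mu\})_\alpha$ collapse back into $\curD\cap\curI$ once $\alpha$ is removed, while in Types $1$ and $3$ the extra codeword $\mu$ is reabsorbed because the standing hypothesis $\mu\in\curD$ holds; conversely the image of $f$ is seen to be contained in $\curD$ by the same bookkeeping. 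Hence $f$ is a well-defined surjective morphism and $\curD\le\curC$.

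By \lemmaref{lem:unique-extension}, $f$ extends to the unique surjective morphism $g:\widehat\curC\to\widehat\curD$ fitting into the commuting square of \lemmaref{lem:extension-pullback}. Now \theoremref{thm:int-completion} gives $\widehat\curC=\widehat\curD_{[\curI]}$, and $\widehat\curD_{[\curI]}$ covers $\widehat\curD$ by \theoremref{thm:int-comp-covering} (\appendixref{sec:int-comp-up-covering}); therefore $\widehat\curC$ covers $\widehat\curD$. Feeding this back through \lemmaref{lem:extension-pullback} yields that $\curC$ covers $\curD$ (in particular $\curD<\curC$), which is the assertion.

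I expect the only real work to be in the first step: verifying, case by case against \tableref{tab:covering-code-construction}, that the $\alpha$-deletion map has image exactly $\curD$ and that its defining simple trunks are proper, together with the small amount of bookkeeping needed to keep $\varnothing$ a codeword in degenerate situations (such as $\mu=\varnothing$). Everything after that is purely formal, assembled from \lemmaref{lem:unique-extension}, \lemmaref{lem:extension-pullback}, \theoremref{thm:int-completion}, and \theoremref{thm:int-comp-covering}.
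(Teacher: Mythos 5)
Your proposal matches the paper's proof exactly: construct the $\alpha$-deletion morphism $f:\curC\to\curD$, lift to $g:\widehat\curC\to\widehat\curD$ via \lemmaref{lem:unique-extension}, identify $\widehat\curC=\widehat\curD_{[\curI]}$ via \theoremref{thm:int-completion}, invoke \theoremref{thm:int-comp-covering} to conclude $\widehat\curC$ covers $\widehat\curD$, and pull back through \lemmaref{lem:extension-pullback}. The only cosmetic difference is that you flag the case-by-case verification that $\alpha$-deletion is surjective with image exactly $\curD$, which the paper leaves implicit as ``natural.''
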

\begin{proof}
Let $\curC$ be one of the covering codes of \definitionref{def:all-covering}. Note that there is a natural surjective morphism $f:\curC\to \curD$ given by deleting the neuron $\alpha$. By Lemma \ref{lem:unique-extension} we obtain a unique surjective morphism $g:\widehat \curC \to \widehat\curD$, which extends $f$. By Lemma \ref{lem:extension-pullback}, $\curC$ covers $\curD$ if and only if $\widehat \curC$ covers $\widehat \curD$. \theoremref{thm:int-completion} tells us that $\widehat \curC$ is equal to $\widehat\curD_{[\curI]}$, which covers $\widehat\curD$ by \theoremref{thm:int-comp-covering}  (\appendixref{sec:int-comp-up-covering}). Thus, $\curC$ covers $\curD$ as desired.
\end{proof}

To end this, we provide the following theorem which shows that if one uses constructions described in \definitionref{def:all-covering}, they can ``climb up" the poset $\pcode$ from a given code to all its covering codes.

\begin{theorem}\label{thm:covering-code-char}
Let $\curC$ and $\curD$ be codes. The following are equivalent:
\begin{itemize}
\item[(i)]  $\curC$ covers $\curD$, and
\item[(ii)]  $\curC$ is isomorphic to one of the covering codes for $\curD$ described in  \definitionref{def:all-covering}.
\end{itemize}
\end{theorem}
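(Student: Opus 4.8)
The plan is to prove Theorem~\ref{thm:covering-code-char} by combining the structural results already established. The direction (ii)~$\Rightarrow$~(i) is exactly Corollary~\ref{cor:all-types-cover}, so no work remains there. The substance of the theorem is the forward direction (i)~$\Rightarrow$~(ii): given that $\curC$ covers $\curD$, I must show $\curC$ is isomorphic to one of the four constructions of Definition~\ref{def:all-covering} applied to some isolated subset $\curI \subseteq \widehat\curD$.

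First I would pass to intersection-completions. By Theorem~\ref{thm:from-below-abstract}, since $\curC$ covers $\curD$, there is an isolated subset $\curI \subseteq \widehat\curD$ with $\widehat\curC \cong \widehat\curD_{[\curI]}$; fix this $\curI$ and let $\mu$ be its minimal element. So at the level of intersection-completions we know exactly what happens: $\widehat\curC$ is obtained from $\widehat\curD$ by adjoining a new neuron $\alpha$ to every codeword of $\curI$, which splits the single trunk $\Tk_{\widehat\curD}(\mu)$ into two trunks (those codewords of $\curI$ now carrying $\alpha$, and the one new codeword $\mu$ which is the intersection of the rest). The task is then to descend this picture back to $\curC$ and $\curD$ themselves and read off which of the four types occurred. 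Concretely, $\curC \subseteq \widehat\curC = \widehat\curD_{[\curI]}$, and every codeword of $\curC$ not containing $\alpha$ lies in $\widehat\curD$ while every codeword containing $\alpha$ has the form $c \cup \alpha$ with $c \in \curI \subseteq \widehat\curD$. I would analyze $\curD = f(\curC)$ where $f$ deletes $\alpha$: the set $\curD \cap \curI$ is determined, and the only freedom left is (a)~whether $\mu \in \curD$ or not, and (b)~whether $\mu \cup \alpha \in \curC$ or not, and (c)~whether $\mu$ itself appears in $\curC$ as an honest codeword or only in $\widehat\curC$. These binary choices are precisely what distinguish Types 1--4, and the conditions in Table~\ref{tab:covering-code-construction} (membership of $\mu$ in $\curD$, $\Tk_\curD(\mu)\setminus\curI \neq \varnothing$, and $\mu = \bigcap_{\sigma \in \curD\cap\curI\setminus\{\mu\}}\sigma$) are forced by the requirement that $\widehat\curC = \widehat\curD_{[\curI]}$ and that $f$ be a morphism with the correct fibers.

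The key steps in order: (1)~invoke Theorem~\ref{thm:from-below-abstract} to get $\curI$ and the isomorphism $\widehat\curC \cong \widehat\curD_{[\curI]}$; (2)~using this isomorphism, identify $\curC$ with a subcode of $\widehat\curD_{[\curI]}$ that contains $\curD$ (after deleting $\alpha$) and generates $\widehat\curD_{[\curI]}$ under intersection; (3)~observe that any codeword of $\curC$ containing $\alpha$ must be $c \cup \alpha$ for $c \in \curD \cap \curI$ and any codeword not containing $\alpha$ and not equal to $\mu$ must lie in $\curD \setminus \curI$ (here the isolated condition on $\curI$ is what guarantees no other codewords of $\widehat\curD$ can appear in $\curC$ without forcing an extra trunk, contradicting that $\curC$ covers $\curD$); (4)~case-split on whether $\mu \in \curD$ and on whether $\mu, \mu\cup\alpha \in \curC$, and in each case verify that the defining conditions of the corresponding type hold and that $\curC$ equals the listed construction. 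The main obstacle I anticipate is step~(3)/(4): showing that the covering hypothesis (exactly one more nonempty trunk, via Corollary~\ref{cor:covering-criterion}) is rigid enough to eliminate all codes strictly between the claimed construction and $\widehat\curD_{[\curI]}$ --- i.e., that $\curC$ cannot contain ``extra'' intersections that would be redundant in $\widehat\curC$ but would change $\curC$ itself. This amounts to a careful trunk-counting argument on $\curC$ versus $\curD$, essentially mirroring the trunk/element bijection used in the proof of Lemma~\ref{lem:extension-pullback}, and checking that the only way to add exactly one nonempty trunk while having $\widehat\curC = \widehat\curD_{[\curI]}$ is one of the four prescribed ways.

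Once the case analysis is complete, each branch directly matches a row of Table~\ref{tab:covering-code-construction}, establishing that $\curC$ is isomorphic to $\curD_{[\curI]}$, $\curD_{(\curI]}$, $\curD_{[\curI)}$, or $\curD_{(\curI)}$ respectively, which is statement~(ii). Combined with Corollary~\ref{cor:all-types-cover} for the converse, this proves the equivalence.
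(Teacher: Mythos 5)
Your proposal is correct and follows essentially the same route as the paper: pass to intersection-completions via Theorem~\ref{thm:from-below-abstract} (equivalently Lemma~\ref{lem:extension-pullback} plus Theorem~\ref{thm:int-comp-covering}) to pin down $\curI$ and $\widehat\curC \cong \widehat\curD_{[\curI]}$, then case-split on the membership of $\mu$ and $\mu\cup\alpha$ in $\curC$ to match one of the four rows of Table~\ref{tab:covering-code-construction}. One small inaccuracy in your sketch: in step~(3) you attribute the constraint ``no other codewords of $\widehat\curD$ can appear in $\curC$'' to the isolated condition and a trunk-counting argument, but the actual reason is simpler and does not need covering at all --- since $f$ is the restriction of $g$, which just deletes $\alpha$, every $c\in\curC$ satisfies $c\setminus\alpha \in \curD$, so codewords of $\curC$ without $\alpha$ must already lie in $\curD$ and codewords with $\alpha$ must be $c'\cup\alpha$ with $c'\in\curD\cap\curI$. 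Your three binary choices (a),(b),(c) are also slightly redundant, since whether $\mu\in\curD$ is determined by whether $\mu$ or $\mu\cup\alpha$ lies in $\curC$, so the paper's cleaner case split on just $\{\mu\in\curC?\}\times\{\mu\cup\alpha\in\curC?\}$ suffices; but this does not affect the correctness of the argument.
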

\begin{proof}
\corollaryref{cor:all-types-cover} tells us that (ii) implies (i), and so it remains to prove the converse. Suppose that $\curC$ covers $\curD$ and let $f:\curC\to\curD$ be a surjective morphism with unique surjective extension $g:\widehat \curC \to \widehat \curD$ as guaranteed by \lemmaref{lem:unique-extension}. \lemmaref{lem:extension-pullback} tells us that $\widehat \curC$ covers $\widehat \curD$, and \theoremref{thm:int-comp-covering}  (\appendixref{sec:int-comp-up-covering}) implies we may choose an isolated subset $\curI\subseteq \widehat \curD$ so that $\widehat\curC$ is isomorphic to $\widehat\curD_{[\curI]}$. Since $\pcode$ describes covering relations between isomorphism classes of codes and the diagram is unchanged by an isomorphism on $\widehat \curC$ (which induces an isomorphism on $\curC$), we may replace $\widehat \curC$ by $\widehat \curD_{[\curI]}$. Thus we can assume without loss of generality that $\widehat \curD_{[\curI]}$ is the intersection-completion of $\curC$. We thus have the following diagram:\[
\begin{tikzcd}
\curC \arrow[r, hook]\arrow[d,twoheadrightarrow,"f",swap]& \widehat \curC = \widehat\curD_{[\curI]}\arrow[d,twoheadrightarrow,"g"]\\
\curD \arrow[r,hook] & \widehat \curD
\end{tikzcd}
\] 

Let $\mu$ be the minimal element of $\curI$. Notice that the surjective map $\widehat\curD_{[\curI]} \to \widehat \curD$ is bijective except for the fact that it identifies the codewords $\mu$ and $\mu\cup\alpha$. Thus for the diagram to commute, we see that $\curC$ contains at least the codewords of $\curD$ that do not have a new neuron added to them when forming $\widehat \curD_{[\curI]}$, as well as the codewords obtained by adding $\alpha$ to codewords of $(\curD\cap\curI)\setminus \{\mu\}$. Note that all of these are present in the covering codes described in Definition \ref{def:all-covering}.

This leaves four cases to consider, based on whether $\curC$ contains $\mu$ and/or $\mu\cup\alpha$.

\begin{itemize}
\item \textbf{Case 1:} Suppose $\curC$ contains both $\mu$ and $\mu\cup \alpha$. Then we see that $\curC$ is equal to the construction in Type 1, and indeed $\mu\in \curD$ which is the condition for Type 1.

\item \textbf{Case 2:} Suppose $\curC$ contains $\mu\cup\alpha$ but not $\mu$. In this case $\curC$ is equal to the construction in Type 2. Again we must have $\mu\in \curD$ since this is the image of $\mu\cup \alpha$ under the surjective map $\curC\to \curD$ that deletes $\alpha$. However, for $\widehat \curC$ to be $\widehat \curD_{[\curI]}$, there must exist some codewords in $\curC$ whose intersection is $\mu$. Since $\mu\cup \alpha$ is a codeword of $\curC$ and $\mu$ is not, this is equivalent to the statement that there is a codeword in $\curC$ that properly contains $\mu$ and not $\alpha$. Such a codeword in $\curC$ must come from a codeword in $\Tk_\curD(\mu)\setminus \curI$, and so $\curI$ satisfies the conditions stipulated for Type 2.

\item \textbf{Case 3:} Suppose $\curC$ contains $\mu$ but not $\mu\cup \alpha$. Again $\mu\in \curD$, and we see that $\curC$ is equal to the construction in Type 3. We know that $\mu\cup \alpha$ lies in $\widehat \curC$, and so there must exist some codewords in $\Tk_\curC(\alpha)$ whose intersection is $\mu\cup\alpha$. Codewords in $\Tk_\curC(\alpha)$ are obtained by adding $\alpha$ to elements of $\curD\cap \curI$ except $\mu$, and so the intersection of all codewords in $(\curD\cap \curI)\setminus \{\mu\}$ must be $\mu$, as stipulated for Type 3.

\item \textbf{Case 4:} Lastly, suppose that $\curC$ contains neither $\mu$ nor $\mu\cup \alpha$. Then $\curC$ is equal to the construction in Type 4, and $\curD$ cannot contain $\mu$ because the only codewords that could map to it under the surjective map $\curC\to\curD$ are $\mu$ and $\mu\cup\alpha$. However, we must be able to recover both $\mu$ and $\mu\cup \alpha$ as intersections of codewords in $\curC$. Using similar arguments to cases 2 and 3, we see that this implies $\mu$ is the intersection of codewords in $(\curD\cap \curI)\setminus\{\mu\}$, and also that $\curD$ must have a codeword properly containing $\mu$ but not in $\curI$. 
\end{itemize}
Thus if $\curC$ covers $\curD$, then $\curC$ is indeed isomorphic to one of the codes from Definition \ref{def:all-covering}. This proves the result.
\end{proof}

\acks{We are grateful to the anonymous referees for their thoughts and feedback on our paper. The authors gratefully acknowledge Zvi Rosen for providing valuable feedback during the early stages of this work. We also thank Alex Kunin for elucidating a compelling connection between morphisms of neural codes and Boolean matrix factorizations, as well as for highlighting a motivating application in connectomics. Finally, we greatly appreciate the insightful discussions with Carina Curto, which have inspired further development of this research.}

\nocite*{}
\bibliography{references}

\appendix

\section{Downward covering relations in $\pcode$: a revision}
\label{sec:down-covering}
This section is a slightly adjusted version of Section 3 in \citep{sunflowers19} to account for our assumption that the $\varnothing$-codeword exists in any neural code. We will characterize the downward covering relation in $\pcode$ in the sense that we combinatorially describe all codes covered by a given neural code $\curC$ in $\pcode$.

\begin{definition}[Trivial and redundant neurons]
\label{def:trivial-redundant-neurons}
Let $\curC\subseteq 2^{[n]}$ be a neural code. A neuron $i\in[n]$ is called \textbf{trivial} if $\Tk_\curC(i) = \varnothing$. A neuron $i$ is called \textbf{redundant} if there exists $\sigma\subseteq [n]\setminus \{i\}$ such that $\Tk_\curC(i) = \Tk_\curC(\sigma)$. 
\end{definition}

\begin{definition}[Covered neural codes]
\label{def:covered-codes}
Let $\curC\subseteq 2^{[n]}$ and for $j\in[n]$ let $T_j = \Tk_\curC(j)$. If $i\in[n]$ is a nontrivial neuron, then the \textbf{$i$th covered code} of $\curC$ is the image of $\curC$ under the morphism determined by the following collection of trunks: $\{T_j \mid T_j\neq T_i\}\cup \{T_j\cap T_i\mid T_j\cap T_i\neq T_i\}.$ This code is denoted $\curC^{(i)}$.
\end{definition}

\begin{example} 
\label{eg:covered-codes}
Let $\curC=\{\emptyset, 1, 12, 23, 13, 123\}$. Then the $1$st covered code \[
\curC^{(1)}=\{\emptyset,ac,ab,bd,abcd\}
\]
is the image of $\curC$ under the morphism defined by the following relabeled trunks
\[
T_a=\Tk_\curC(2),T_b=\Tk_\curC(3),T_c=\Tk_\curC(12),T_d=\Tk_\curC(13)
\]
as in \definitionref{def:defining-morphisms}. To verify, $\curC$ has 8 trunks, namely
\[
\Tk_\curC(\emptyset),\Tk_\curC(1),\Tk_\curC(2),\Tk_\curC(3),\Tk_\curC(12),\Tk_\curC(13),\Tk_\curC(23),\mbox{ and }\Tk_\curC(123),
\]
and $\curC^{(1)}$ has 7 trunks, namely
\[
\Tk_{\curC^{(1)}}(\emptyset),\Tk_{\curC^{(1)}}(a),\Tk_{\curC^{(1)}}(b),\Tk_{\curC^{(1)}}(c),\Tk_{\curC^{(1)}}(d),\Tk_{\curC^{(1)}}(ab),\Tk_{\curC^{(1)}}(ad)
\]
where $\Tk_{\curC^{(1)}}(c)=\Tk_{\curC^{(1)}}(ac)$, $\Tk_{\curC^{(1)}}(d)=\Tk_{\curC^{(1)}}(bd)$, and all other $\Tk_{\curC^{(1)}}(\sigma)$ are equal when $\sigma\in\{ad, bc, cd, abc, abd, bcd, abcd\}$
\end{example}

We will prove that all codes defined in \definitionref{def:covered-codes} are exactly the ones that $\curC$ covers, as long as the neuron $i$ is non-trivial and non-redundant.

\begin{definition}[Trunk generation]
\label{def:trunk-gen}
Let $\curC$ be a neural code and $\{T_1,\ldots, T_m\}$ a collection of trunks in $\curC$. We say that a trunk $T$ in $\curC$ is \textbf{generated} by $\{T_1,\ldots, T_m\}$ if there exists $\sigma\subseteq [m]$ such that $T = \bigcap_{i\in \sigma} T_i$. 
\end{definition}

By convention, we say that the empty intersection is all of $\curC$. Note that every nonempty trunk is generated by the set of simple trunks. Furthermore, a neuron $i$ is redundant if and only if $\Tk_\curC(i)$ is generated by $\{\Tk_\curC(j)\mid j\neq i\}$ by \definitionref{def:trivial-redundant-neurons} and \definitionref{def:trunk-gen}. 

\begin{lemma}\label{lem:factor}
Let $\curC, \curD,$ and $\curE$ be neural codes, and let $f:\curC\to\curD$ and $g:\curC\to\curE$ be surjective morphisms determined by collections of trunks $A$ and $B$, respectively. There exists a surjective morphism $h:\curD\to \curE$ such that the diagram below commutes (i.e. $g = h\circ f$) if and only if every trunk in $B$ is generated by $A$.
\[
\begin{tikzcd}
\curC \arrow[r, twoheadrightarrow, "f"] \arrow[rd, twoheadrightarrow, "g",swap] & \curD\arrow[d,twoheadrightarrow,dashed,"h"]\\
& \curE
\end{tikzcd}
\] 
\end{lemma}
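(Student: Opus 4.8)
The plan is to prove Lemma~\ref{lem:factor} by unwinding the characterization of morphisms as being determined by collections of trunks (Definition~\ref{def:defining-morphisms} and the following proposition). Recall that a morphism $f:\curC\to\curD$ with $\curD\subseteq 2^{[p]}$ is, up to isomorphism on the codomain, recovered from the trunks $T_j = f^{-1}(\Tk_\curD(j))$, and similarly for $g$ with trunks in $B$. The key bookkeeping fact I would isolate first is that for a morphism determined by a collection of trunks $\{T_1,\dots,T_m\}$, the preimage of an arbitrary trunk $\Tk_{\curD}(\sigma)$ (for $\sigma\subseteq[m]$) is exactly $\bigcap_{j\in\sigma} T_j$, i.e. the trunk of $\curD$ indexed by $\sigma$ pulls back to the $\sigma$-generated trunk of $\curC$. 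This is essentially immediate from the definition $c\mapsto\{j\mid c\in T_j\}$, but stating it cleanly makes both directions short.

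For the forward direction, suppose $h:\curD\to\curE$ exists with $g = h\circ f$. Each trunk $T\in B$ has the form $T = g^{-1}(\Tk_\curE(k))$ for the appropriate neuron $k$ of $\curE$. Since $g = h\circ f$, we get $T = f^{-1}(h^{-1}(\Tk_\curE(k)))$. Now $h^{-1}(\Tk_\curE(k))$ is a proper trunk in $\curD$ (because $h$ is a morphism), hence is generated by the simple trunks of $\curD$; write it as $\Tk_\curD(\sigma)$ for some $\sigma$. Applying the bookkeeping fact, $f^{-1}(\Tk_\curD(\sigma)) = \bigcap_{j\in\sigma}\bigl(f^{-1}(\Tk_\curD(j))\bigr) = \bigcap_{j\in\sigma} A_j$, which is generated by $A$. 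Hence every $T\in B$ is generated by $A$.

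For the converse, assume every trunk in $B$ is generated by $A$. I want to build $h$. Using the proposition after Definition~\ref{def:defining-morphisms}, it suffices to exhibit, for each neuron $k$ indexing $\curE\subseteq 2^{[q]}$, a trunk $S_k$ in $\curD$ such that the morphism $\curD\to 2^{[q]}$ determined by $\{S_k\}$ restricts to a map $\curD\to\curE$ agreeing with $h$, and then check $g = h\circ f$ and surjectivity. Concretely, the $k$-th trunk in $B$ is $B_k = \bigcap_{j\in\sigma_k} A_j$ for some $\sigma_k\subseteq[m]$ (where $A = \{A_1,\dots,A_m\}$ and $f$ maps into $2^{[m]}$), and I set $S_k := \Tk_\curD(\sigma_k)$, the trunk of $\curD$ generated by the neurons in $\sigma_k$. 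Then for $c\in\curC$: $c\in B_k \iff c\in A_j$ for all $j\in\sigma_k \iff f(c)$ contains $\sigma_k \iff f(c)\in S_k$. This shows the morphism $h$ determined by $\{S_k\}_k$ satisfies $h(f(c))_k = 1 \iff g(c)_k = 1$ for all $k$, i.e. $h\circ f = g$ as functions into $2^{[q]}$; in particular the image of $h\circ f$, which equals $g(\curC) = \curE$ since $g$ is surjective, lands in $\curE$, and since $f$ is surjective $h(\curD) = h(f(\curC)) = g(\curC) = \curE$, so $h$ is a well-defined surjective morphism $\curD\to\curE$. One should also verify $h$ is genuinely a morphism, i.e. that each $S_k$ is a \emph{proper} trunk (nonempty, not all of $\curD$) — nonemptiness follows because $B_k$ is nonempty (it's in the defining collection of the surjective morphism $g$, and $f$ surjective forces $S_k = f(B_k)$-ish to be nonempty), and properness similarly transfers from $B_k$ being a proper trunk; the preimages of arbitrary proper trunks of $\curE$ are then intersections of the $S_k$, hence proper trunks of $\curD$.

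The main obstacle I anticipate is not any single deep step but the careful handling of the "proper trunk" conventions and the indexing: one must make sure that when $\sigma_k = \varnothing$ (empty intersection $= \curC$, which is not a proper trunk) the construction still behaves, and that the $S_k$ really are proper trunks in $\curD$ so that $h$ qualifies as a morphism under Definition~\ref{def:morphisms-of-codes}. I would address this by first observing that since $g$ is surjective each $B_k$ is a proper trunk of $\curC$ (nonempty and $\neq\curC$), and then showing properness and nonemptiness are preserved when passing to $S_k$ in $\curD$ via the surjection $f$ — concretely, $f(B_k)$ generates $S_k$ and surjectivity of $f$ ensures $S_k\neq\varnothing$ and $S_k\neq\curD$ exactly when $B_k\neq\varnothing$ and $B_k\neq\curC$. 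Once this is pinned down, both implications are just the two computations above.
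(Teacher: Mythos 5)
The paper does not give its own proof of this lemma; it simply cites \cite[Lemma 3.13]{sunflowers19}, so there is no in-paper argument to compare against. Judged on its own merits, your argument is essentially correct and self-contained: the ``bookkeeping fact'' $f^{-1}(\Tk_\curD(\sigma)) = \bigcap_{j\in\sigma}A_j$ is exactly the right tool, the forward direction is clean, and the construction $S_k := \Tk_\curD(\sigma_k)$ with $h$ the morphism determined by $\{S_k\}$ does produce the factorization $g = h\circ f$.

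One spot deserves tightening. You write that ``the preimages of arbitrary proper trunks of $\curE$ are then intersections of the $S_k$, hence proper trunks of $\curD$.'' An intersection of proper trunks is always a trunk and is automatically $\neq\curD$ (being contained in a single $S_k$), but it is \emph{not} automatically nonempty, and nonemptiness is part of being proper in this paper's convention. What rescues this is the surjectivity you establish a sentence earlier: since $h(\curD) = h(f(\curC)) = g(\curC) = \curE$, for any proper trunk $T = \Tk_\curE(\sigma)$ in $\curE$ one has $T\neq\varnothing$, hence $h^{-1}(T) = \bigcap_{k\in\sigma}S_k$ meets $\curD$ nontrivially, and $T\neq\curE$ forces $h^{-1}(T)\neq\curD$. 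It would be cleaner to state this explicitly rather than leave ``hence proper'' to carry the weight. Similarly, your verification that each individual $S_k$ is a proper trunk is correct but should be spelled out via $B_k = f^{-1}(S_k)$: since $B_k$ is a proper trunk of $\curC$ and $f$ is surjective, $B_k\neq\varnothing$ gives $S_k\neq\varnothing$, and $B_k\neq\curC$ forces $S_k\neq\curD$ (otherwise $f^{-1}(\curD)=\curC=B_k$). With those two small clarifications the proof is complete.

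Two further remarks that you flag but could dispose of quickly: the choice of $\sigma_k$ with $B_k = \bigcap_{j\in\sigma_k}A_j$ is not unique, but the resulting $S_k = f(B_k)$ is, since $f$ is surjective and $B_k = f^{-1}(S_k)$; and $\sigma_k=\varnothing$ cannot occur, since that would force $B_k=\curC$, contradicting properness of $B_k$.
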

\begin{proof}
See \cite[Lemma 3.13]{sunflowers19}.
\end{proof}

\begin{corollary}\label{cor:samenumbertrunks}
Let $f:\curC\to\curD$ be a surjective morphism. Then $f$ is an isomorphism if and only if $\curC$ and $\curD$ have the same number of trunks.
\end{corollary}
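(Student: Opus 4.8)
The plan is to prove both implications at once by assembling a short chain of equivalences resting on \lemmaref{lem:factor}. First I would set up notation: write $\curD\subseteq 2^{[m]}$, put $T_j=f^{-1}(\Tk_\curD(j))$ and $A=\{T_1,\dots,T_m\}$, so that $f$ is the surjective morphism determined by $A$. Deleting the trivial neurons of $\curC$ changes neither its isomorphism type nor its number of nonempty trunks (a trivial neuron lies in no codeword, so every trunk it helps cut out is empty), so I may also assume that all simple trunks $\Tk_\curC(1),\dots,\Tk_\curC(n)$ are proper; then the collection $B$ of these simple trunks determines the identity morphism $\mathrm{id}_\curC$.

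The key step is the following counting identity. Because $f$ is surjective and preimages commute with intersections, the map $T\mapsto f^{-1}(T)$ is a bijection from the nonempty trunks of $\curD$ onto the nonempty trunks of $\curC$ generated by $A$: indeed $f^{-1}(\Tk_\curD(\sigma))=\bigcap_{j\in\sigma}T_j$ (with empty intersection equal to $\curC$), so the image is exactly $\{\bigcap_{j\in\sigma}T_j:\sigma\subseteq[m]\}$, while $f\bigl(f^{-1}(T)\bigr)=T$ forces injectivity and shows that non-emptiness is preserved in both directions. Hence $\curD$ has exactly as many nonempty trunks as $\curC$ has nonempty trunks generated by $A$. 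Since $\curC$ is finite, every nonempty trunk of $\curC$ is an intersection of simple trunks, and intersections of trunks generated by $A$ are again generated by $A$, that number equals the total number of nonempty trunks of $\curC$ if and only if every simple trunk of $\curC$ is generated by $A$.

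To finish, I would invoke \lemmaref{lem:factor} with $g=\mathrm{id}_\curC$ (which is determined by $B$): there is a surjective morphism $h:\curD\to\curC$ with $h\circ f=\mathrm{id}_\curC$ if and only if every trunk of $B$, i.e.\ every simple trunk of $\curC$, is generated by $A$. Such an $h$ exists precisely when $f$ is an isomorphism --- if it does, then $f$ is injective and hence (being surjective) bijective, and its inverse $h$ is a morphism; conversely $h=f^{-1}$ works when $f$ is an isomorphism. Stringing these together with the previous paragraph yields: $f$ is an isomorphism $\iff$ every simple trunk of $\curC$ is generated by $A$ $\iff$ $\curC$ and $\curD$ have the same number of (nonempty) trunks.

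I expect the counting bijection to be the main obstacle --- not because any single step is deep, but because of the bookkeeping around it: one must count \emph{nonempty} trunks rather than all trunks (padding a code with trivial neurons creates an empty trunk, which would make the literal statement false), keep the empty-intersection convention straight, and verify that after deleting trivial neurons the simple trunks really do determine $\mathrm{id}_\curC$ and are all proper, so that \lemmaref{lem:factor} applies.
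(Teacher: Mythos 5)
Your proof is correct, and it is more self-contained than the paper's: the paper disposes of the forward direction in one line ($f^{-1}$ gives a bijection on trunks) and simply cites an external reference for the converse, whereas you derive both directions from \lemmaref{lem:factor} by a single chain of equivalences: $f$ is an isomorphism iff $\mathrm{id}_\curC$ factors through $f$, which by \lemmaref{lem:factor} happens iff every simple trunk of $\curC$ is generated by $A=\{f^{-1}(\Tk_\curD(j))\}$, which by the bijection $T\mapsto f^{-1}(T)$ (surjectivity of $f$ being exactly what makes this preserve nonemptiness in both directions) happens iff $\curC$ and $\curD$ have equally many nonempty trunks. This buys you a proof that stays entirely inside the tools the paper develops and makes the role of \lemmaref{lem:factor} explicit. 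Your normalization step --- deleting trivial neurons so that the simple trunks of $\curC$ are all proper and thus can serve as the determining collection $B$ for $\mathrm{id}_\curC$ --- is genuinely necessary for \lemmaref{lem:factor} to apply, and your observation that one must count \emph{nonempty} trunks (since a trivial neuron may or may not introduce an empty trunk, depending on whether the full codeword is present) is a real subtlety that the paper's one-line forward argument also silently relies on. Two small points you could tighten: (1) you should also discard trivial neurons of $\curD$, or note that a trivial neuron $j$ of $\curD$ yields $T_j=\varnothing$, which is not a proper trunk and contributes nothing to what $A$ generates; (2) in your last equivalence you implicitly use finiteness of $\curC$ to pass from "same cardinality" to "same set" of nonempty trunks generated by $A$ versus all nonempty trunks, which is fine but worth flagging.
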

\begin{proof}
If $f$ is an isomorphism, then the inverse $f^{-1}$ yields a bijection on trunks in $\curC$ and trunks in $\curD$. The converse is exactly \cite[Proposition 3.16]{sunflowers19}.\end{proof}

\begin{corollary}\label{cor:deleteredundant}
Let $\curC\subseteq 2^{[n]}$ be a code, and suppose that $i\in[n]$ is a redundant neuron. Let $\curD = \{c\setminus \{i\}\mid c\in \curC\}$. The map $f:\curC\to\curD$ given by $f(c) = c\setminus \{i\}$ is an isomorphism.
\end{corollary}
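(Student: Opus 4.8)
The plan is to check that $f$ is a well-defined surjective morphism and then to deduce that it is an isomorphism from \corollaryref{cor:samenumbertrunks} by exhibiting a bijection between the trunks of $\curC$ and those of $\curD$. The redundancy hypothesis will be used in exactly one place: to show this trunk correspondence is surjective.

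First I would observe that $f$ is obviously well-defined and surjective onto $\curD$ by construction. To see that it is a morphism, identify $\curD$ with a code on the neuron set $[n]\setminus\{i\}$, so that every proper trunk of $\curD$ has the form $\Tk_\curD(\tau)$ with $\tau\subseteq[n]\setminus\{i\}$. Since $i\notin\tau$, we get $f^{-1}(\Tk_\curD(\tau)) = \{c\in\curC\mid \tau\subseteq c\setminus\{i\}\} = \{c\in\curC\mid\tau\subseteq c\} = \Tk_\curC(\tau)$. This preimage is nonempty because $f$ is surjective, and it is not all of $\curC$, since otherwise $\curD = f(\curC)$ would be contained in the proper trunk $\Tk_\curD(\tau)$. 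Hence $f$ is a morphism, and the same computation shows that the assignment $S\mapsto f^{-1}(S)$ sends trunks of $\curD$ to trunks of $\curC$ (sending $\curD$ to $\curC$, and empty trunks to empty trunks).

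The main step is to show that $S\mapsto f^{-1}(S)$ is a bijection between the set of trunks of $\curD$ and the set of trunks of $\curC$. Injectivity is formal: for surjective $f$ one has $f(f^{-1}(S)) = S$, so $f^{-1}(S_1) = f^{-1}(S_2)$ forces $S_1 = S_2$. For surjectivity, take an arbitrary trunk $\Tk_\curC(\sigma)$ of $\curC$. If $i\notin\sigma$, it equals $f^{-1}(\Tk_\curD(\sigma))$ by the computation above. If $i\in\sigma$, write $\sigma = \sigma'\cup\{i\}$ with $i\notin\sigma'$; since $i$ is redundant there is some $\rho\subseteq[n]\setminus\{i\}$ with $\Tk_\curC(i) = \Tk_\curC(\rho)$ (equivalently, $\Tk_\curC(i)$ is generated by the remaining simple trunks, as noted after \definitionref{def:trunk-gen}), and then $\Tk_\curC(\sigma) = \Tk_\curC(\sigma')\cap\Tk_\curC(i) = \Tk_\curC(\sigma'\cup\rho) = f^{-1}(\Tk_\curD(\sigma'\cup\rho))$. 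So every trunk of $\curC$ is hit, and the correspondence is a bijection. Consequently $\curC$ and $\curD$ have the same number of trunks, and since $f$ is a surjective morphism, \corollaryref{cor:samenumbertrunks} shows it is an isomorphism.

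I expect the only mildly delicate point to be the bookkeeping around improper and empty trunks, and making sure the correspondence is genuinely defined at the level of trunks rather than of indexing subsets $\sigma$. The conceptual heart is the single application of redundancy: it is precisely what lets us rewrite any intersection of simple trunks of $\curC$ that involves $\Tk_\curC(i)$ as one that does not, which is what forces $\curC$ and $\curD$ to have equally many trunks.
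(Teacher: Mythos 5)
Your proof is correct and follows essentially the same route as the paper: show $f$ is a surjective morphism by computing preimages of trunks, use redundancy to argue every trunk of $\curC$ arises as a preimage of a trunk of $\curD$ (so the trunk counts match), and invoke \corollaryref{cor:samenumbertrunks}. The paper's version is terser but relies on exactly the same observation — redundancy lets one rewrite $\Tk_\curC(i)$ as an intersection of simple trunks not involving $i$ — which you have spelled out carefully.
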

\begin{proof}
The map described is a morphism since the preimage of $\Tk_\curD(\sigma)$ is simply $\Tk_\curC(\sigma)$. The codes $\curC$ and $\curD$ have the same number of trunks since every trunk in $\curC$ can be expressed as an intersection of simple trunks other than $\Tk_\curC(i)$, so $f$ is an isomorphism.
\end{proof}\begin{proposition}\label{prop:intersectionpreimage}
Let $f:\curC\to\curD$ be a morphism. If $S$ and $T$ are trunks in $\curD$, then $f^{-1}(S\cap T) = f^{-1}(S)\cap f^{-1}(T)$.
\end{proposition}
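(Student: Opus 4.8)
The plan is to prove the two inclusions $f^{-1}(S\cap T)\subseteq f^{-1}(S)\cap f^{-1}(T)$ and $f^{-1}(S)\cap f^{-1}(T)\subseteq f^{-1}(S\cap T)$ separately, but in fact both follow from a single pointwise observation. For any codeword $c\in\curC$, we have $c\in f^{-1}(S\cap T)$ precisely when $f(c)\in S\cap T$, which holds precisely when $f(c)\in S$ and $f(c)\in T$, i.e. precisely when $c\in f^{-1}(S)$ and $c\in f^{-1}(T)$, i.e. precisely when $c\in f^{-1}(S)\cap f^{-1}(T)$. So in fact the identity holds set-theoretically for the preimage of \emph{any} intersection of subsets, with no use of the trunk or morphism structure at all.

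The only subtlety worth flagging is a bookkeeping one: we should make sure the statement is not secretly asking us to verify that $f^{-1}(S\cap T)$ is a \emph{trunk} (which would require the morphism hypothesis, since preimages of proper trunks are proper trunks by \definitionref{def:morphisms-of-codes}, and the intersection of two trunks is a trunk by \cite[Proposition 2.2]{morphisms20}). But the equation as written is purely an equation of subsets of $\curC$, so the elementary argument above suffices, and the morphism hypothesis is not needed. I would note this explicitly so the reader is not left wondering.

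There is no real obstacle here; the ``hard part'' is merely deciding how much detail to include. I would write it in one or two lines: a codeword $c$ lies in $f^{-1}(S\cap T)$ iff $f(c)\in S\cap T$ iff $f(c)\in S$ and $f(c)\in T$ iff $c\in f^{-1}(S)$ and $c\in f^{-1}(T)$ iff $c\in f^{-1}(S)\cap f^{-1}(T)$, and conclude. If desired, one can add the remark that $S$ and $T$ being trunks (and $f$ a morphism) then guarantees $f^{-1}(S\cap T)$ is itself a trunk in $\curC$, but this is not part of the assertion.
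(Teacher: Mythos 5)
Your argument is correct and is exactly what the paper does: the paper's entire proof is the single sentence ``This is true for any function $f:\curC\to\curD$,'' relying on the same elementary set-theoretic fact that preimages commute with intersections. You merely spell out the pointwise verification and add a helpful remark that the trunk/morphism hypotheses are not needed for the equality itself.
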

\begin{proof} This is true for any function $f:\curC\to\curD$.\end{proof}
\begin{lemma}\label{lem:Cifactor}
Let $\curC\subseteq [n]$ and let $f:\curC\to\curD$ be a surjective morphism, and suppose that $f$ is not an isomorphism. Then there exists a neuron $i\in[n]$ so that $\curD\le \curC^{(i)}$. 
\end{lemma}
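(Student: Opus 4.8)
The plan is to realize the given surjection $f$ as a composite $\curC \to \curC^{(i)} \to \curD$ for a suitable non-trivial neuron $i$; producing the second arrow is exactly the assertion $\curD \leq \curC^{(i)}$. The tool for this is \lemmaref{lem:factor}. Write $p \colon \curC \to \curC^{(i)}$ for the canonical surjection determined by the collection $B_i$ of trunks in \definitionref{def:covered-codes}, and let $A = \{f^{-1}(\Tk_\curD(j))\}_j$ be the collection determining $f$. Then \lemmaref{lem:factor} says that a surjective morphism $\curC^{(i)} \to \curD$ through which $f$ factors exists if and only if every trunk in $A$ is generated (in the sense of \definitionref{def:trunk-gen}) by $B_i$. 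So the entire problem reduces to choosing $i$ so that this generation condition holds.

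To choose $i$: by Proposition~\ref{prop:intersectionpreimage}, the trunks of $\curC$ generated by $A$ are precisely the preimages $f^{-1}(T)$ of trunks $T$ of $\curD$, and since $f$ is surjective the map $T \mapsto f^{-1}(T)$ is injective; hence $\curC$ has exactly as many trunks generated by $A$ as $\curD$ has trunks. Since $f$ is not an isomorphism, \corollaryref{cor:samenumbertrunks} tells us this number is strictly less than the number of trunks of $\curC$, so some trunk of $\curC$ is not generated by $A$. As every trunk of $\curC$ is an intersection of simple trunks and an intersection of generated trunks is generated, some simple trunk $\Tk_\curC(i)$ is not generated by $A$; moreover $i$ can be taken non-trivial, since every nonempty trunk is an intersection of simple trunks of non-trivial neurons, so if every non-trivial simple trunk were generated by $A$ then every trunk would be. Fix this non-trivial $i$, so that $\curC^{(i)}$ is defined.

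It then remains only to verify the generation condition for this $i$, for which the key structural claim I would establish is that $B_i$ generates every trunk of $\curC$ other than $\Tk_\curC(i)$ itself. Granting this claim the proof finishes at once: since $\Tk_\curC(i)$ is not generated by $A$, it is not a member of $A$, so every member of $A$ is a trunk distinct from $\Tk_\curC(i)$ and is therefore generated by $B_i$; \lemmaref{lem:factor} then produces the desired surjection $\curC^{(i)} \to \curD$, giving $\curD \leq \curC^{(i)}$.

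The main obstacle is the structural claim, which is the only place I expect genuine care to be required. The plan there: take a trunk $T \ne \Tk_\curC(i)$ and write $T = \bigcap_{j \in \rho} \Tk_\curC(j)$. If no simple trunk appearing in this intersection equals $\Tk_\curC(i)$, then all of them lie in $B_i$ and $T$ is generated by $B_i$. Otherwise we may replace such an index by $i$ and assume $i \in \rho$, so $T = \Tk_\curC(i) \cap \bigcap_{j \in \rho \setminus \{i\}} \Tk_\curC(j)$; because $T \ne \Tk_\curC(i)$ the set $\rho \setminus \{i\}$ is nonempty, and folding $\Tk_\curC(i)$ into each remaining factor gives $T = \bigcap_{j \in \rho \setminus \{i\}} \bigl(\Tk_\curC(j) \cap \Tk_\curC(i)\bigr)$. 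Each factor that differs from $\Tk_\curC(i)$ lies in $B_i$; the factors equal to $\Tk_\curC(i)$ may be deleted without changing the intersection, and not all of them equal $\Tk_\curC(i)$ (else $T = \Tk_\curC(i)$); hence $T$ is an intersection of trunks from $B_i$. The bookkeeping around simple trunks that happen to coincide with $\Tk_\curC(i)$ — which is exactly why the definition of $B_i$ bundles in the intersections $\Tk_\curC(j) \cap \Tk_\curC(i)$ — together with keeping the empty-intersection convention in mind for the case $T = \curC$, are the only points that need attention.
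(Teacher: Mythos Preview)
Your proposal is correct and follows essentially the same approach as the paper: locate a simple trunk $\Tk_\curC(i)$ that is not the preimage under $f$ of any trunk in $\curD$ (equivalently, not generated by $A$), observe that the defining collection $B_i$ for $\curC\to\curC^{(i)}$ generates every nonempty trunk of $\curC$ except possibly $\Tk_\curC(i)$, and invoke \lemmaref{lem:factor}. The paper merely asserts the structural claim about $B_i$, whereas you write out its verification, and you also make explicit that $i$ may be chosen non-trivial so that $\curC^{(i)}$ is defined; otherwise the arguments are the same.
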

\begin{proof}
By \cite[Proposition 3.15]{sunflowers19}, the map $f^{-1}$ is an injection from trunks in $\curD$ to those in $\curC$. \corollaryref{cor:samenumbertrunks} implies that $\curC$ has more trunks than $\curD$, and so there must be some trunk in $\curC$ that is not the preimage of a trunk in $\curD$.  In fact there must be a simple trunk in $\curC$ that is not the preimage of a trunk in $\curD$ (Proposition \ref{prop:intersectionpreimage} implies that the set of trunks in $\curC$ which are preimages of trunks in $\curD$ is closed under intersection, and every nonempty trunk is an intersection of simple trunks in $\curC$). Let $i\in[n]$ be a neuron so that $\Tk_\curC(i)$ is not the preimage of a trunk in $\curD$.

Recall that the surjective morphism $\curC\to\curC^{(i)}$ is determined by a set of trunks which generate all nonempty trunks in $\curC$ except for possibly $\Tk_\curC(i)$. Thus by \lemmaref{lem:factor} there exists a surjective map $g:\curC^{(i)}\to\curD$ whose composition with $\curC\to\curC^{(i)}$ is equal to $f$. This shows that $\curD\le \curC^{(i)}$ as desired. 
\end{proof}

\begin{theorem}\label{thm:covering-above}
Let $\curC\subseteq 2^{[n]}$ and $\curD\subseteq 2^{[m]}$ be codes. Then $\curC$ covers $\curD$ in $\pcode$ if and only if $\curD\cong \curC^{(i)}$ for some non-redundant, nontrivial neuron $i\in[n]$. 
\end{theorem}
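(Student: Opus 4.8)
The plan is to prove both directions of the biconditional, leveraging the machinery already assembled in this appendix. The key technical tools are \lemmaref{lem:Cifactor}, which says that any surjective non-isomorphism out of $\curC$ factors through some $\curC^{(i)}$, and \corollaryref{cor:samenumbertrunks}, which detects isomorphisms by counting trunks. I would also need to know that $\curC^{(i)}$ is genuinely a proper minor of $\curC$ (i.e.\ that the morphism $\curC\to\curC^{(i)}$ is not an isomorphism) exactly when $i$ is non-redundant, and that deleting a redundant neuron changes nothing up to isomorphism (\corollaryref{cor:deleteredundant}).

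For the forward direction, suppose $\curC$ covers $\curD$, so $\curD < \curC$ and there is a surjective morphism $f:\curC\to\curD$ which is not an isomorphism. By \lemmaref{lem:Cifactor} there is a neuron $i$ with $\curD\le\curC^{(i)}$. I would first argue that $i$ may be taken non-trivial: the lemma already produces $i$ with $\Tk_\curC(i)$ not a preimage of a trunk in $\curD$, and a trivial neuron has $\Tk_\curC(i)=\varnothing=f^{-1}(\varnothing)$, so the $i$ extracted in the proof of \lemmaref{lem:Cifactor} is automatically non-trivial. Next, $i$ may be taken non-redundant: if $\Tk_\curC(i)$ were generated by the other simple trunks, then $\curC\to\curC^{(i)}$ would be an isomorphism by \corollaryref{cor:samenumbertrunks} (since $\curC^{(i)}$'s defining trunks would still generate all trunks of $\curC$), contradicting that $\Tk_\curC(i)$ is not a preimage under $f$ — more carefully, redundancy of $i$ means $\Tk_\curC(i)$ is generated by $\{\Tk_\curC(j):j\neq i\}$, so $\curC^{(i)}\cong\curC$, whence $\curD\le\curC^{(i)}\cong\curC$ with $\curD<\curC$ would force the morphism $\curC^{(i)}\to\curD$ to be a non-isomorphism, and one can iterate/choose a non-redundant witness. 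So we have a chain $\curD\le\curC^{(i)}<\curC$ with $i$ non-trivial and non-redundant. Because $\curC$ \emph{covers} $\curD$, there are no codes strictly between them, so either $\curD\cong\curC^{(i)}$ or $\curC^{(i)}\cong\curC$; the latter is excluded by non-redundancy (so $\curC^{(i)}$ has strictly fewer trunks than $\curC$ by \corollaryref{cor:samenumbertrunks}), forcing $\curD\cong\curC^{(i)}$.

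For the reverse direction, suppose $\curD\cong\curC^{(i)}$ for a non-trivial, non-redundant neuron $i$. Non-redundancy plus \corollaryref{cor:samenumbertrunks} gives $\curC^{(i)}<\curC$ strictly. To see this is a covering relation, suppose $\curC^{(i)}\le\curE<\curC$ for some $\curE$; compose the surjections to get a surjective non-isomorphism $\curC\to\curE$, apply \lemmaref{lem:Cifactor} to get $\curE\le\curC^{(j)}$ for some $j$, and then use a trunk-counting argument: the morphism $\curC\to\curC^{(i)}$ deletes exactly one trunk (it identifies $\Tk_\curC(i)$ with $\Tk_\curC(\sigma)$ for the appropriate $\sigma$ coming from the construction, and all other trunk-structure is preserved), so $\curC^{(i)}$ has exactly one fewer trunk than $\curC$, hence any $\curE$ with $\curC^{(i)}\le\curE<\curC$ must have the same trunk count as $\curC^{(i)}$ and as $\curE$ satisfies $\curE < \curC$ it cannot equal $\curC$; combined with the factorization $\curC^{(i)}\le\curE$ this forces $\curE\cong\curC^{(i)}$ by \corollaryref{cor:samenumbertrunks}. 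Thus nothing lies strictly between, and $\curC$ covers $\curD\cong\curC^{(i)}$.

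The main obstacle I expect is the bookkeeping around the exact trunk count of $\curC^{(i)}$: one must verify carefully from \definitionref{def:covered-codes} that the defining collection $\{T_j\mid T_j\neq T_i\}\cup\{T_j\cap T_i\mid T_j\cap T_i\neq T_i\}$ generates every trunk of $\curC$ except $\Tk_\curC(i)$ itself — so that the induced morphism kills exactly one trunk — and that it does not accidentally kill more (which would happen only if $i$ were redundant, precisely the hypothesis we exclude). Establishing this ``drops exactly one trunk'' statement, together with confirming that a non-trivial non-redundant $i$ can always be selected in the forward direction, is where the real work lies; the rest is assembling the pieces via \lemmaref{lem:factor}, \lemmaref{lem:Cifactor}, and \corollaryref{cor:samenumbertrunks}.
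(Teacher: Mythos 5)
Your overall strategy mirrors the paper's proof: use \lemmaref{lem:Cifactor} to factor the covering morphism through some $\curC^{(i)}$ in the forward direction, and use trunk counting via \corollaryref{cor:samenumbertrunks} to show nothing lies strictly between $\curC$ and $\curC^{(i)}$ in the reverse direction. The paper's reverse direction is exactly your trunk-counting argument, minus the unnecessary detour through re-applying \lemmaref{lem:Cifactor} to the intermediate code $\curE$; you can drop that step and go directly from $\curC^{(i)}$ having one fewer trunk than $\curC$ to \corollaryref{cor:samenumbertrunks}.

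The genuine gap is your handling of non-redundancy in the forward direction. You correctly observe that if all simple trunks $\Tk_\curC(j)$ with $j\neq i$ were preimages under $f$ and $i$ were redundant, then closure of preimages under intersection (\propositionref{prop:intersectionpreimage}) would force $\Tk_\curC(i)$ to be a preimage too, a contradiction. But if some \emph{other} simple trunk is also not a preimage, that contradiction evaporates, and your fallback — ``one can iterate/choose a non-redundant witness'' — is not an argument. The iteration can fail to make progress: you could have distinct neurons $i\neq j$ with $\Tk_\curC(i)=\Tk_\curC(j)$, in which case each is redundant (each witnessed by the other), each has a non-preimage simple trunk, and passing from $i$ to $j$ and back goes nowhere. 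The paper sidesteps this entirely by invoking \corollaryref{cor:deleteredundant} at the very start to replace $\curC$ by an isomorphic code with \emph{no} redundant neurons; then the $i$ supplied by \lemmaref{lem:Cifactor} is automatically non-redundant (and non-trivial, for the reason you give). You should make the same reduction rather than trying to massage the witness $i$ in place.
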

\begin{proof}
By \corollaryref{cor:deleteredundant} we can reduce to the case where $\curC$ has no redundant neurons. Then suppose that $\curC$ covers $\curD$, so there exists a surjective morphism $f:\curC\to\curD$ that is not an isomorphism. \lemmaref{lem:Cifactor} implies that for some $i\in[n]$ we have $\curD\le \curC^{(i)}$. But $\curC^{(i)} < \curC$ since $i$ is not redundant, and since $\curC$ covers $\curD$ we must have that $\curD = \curC^{(i)}$ in $\pcode$ so they are isomorphic.

For the converse, we must argue that $\curC$ covers $\curC^{(i)}$. The two codes are not isomorphic since $\curC^{(i)}$ has exactly one less trunk than $\curC$. Any code $\curD$ such that $\curC^{(i)} \le \curD \le \curC$, either has the same number of trunks as $\curC$ or as $\curC^{(i)}$, and is isomorphic to one of them by \corollaryref{cor:samenumbertrunks}, so $\curC$ covers $\curC^{(i)}$ as desired.  
\end{proof}

\begin{corollary}[Covering criterion]
\label{cor:covering-criterion}
Let $f:\curC\to\curD$ be a surjective morphism. Then $\curC$ covers $\curD$ if and only if $\curC$ has exactly one more trunk than $\curD$. 
\end{corollary}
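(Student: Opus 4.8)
The plan is to derive this corollary from \theoremref{thm:covering-above} together with the fact that the number of trunks is monotone along surjective morphisms. First I would record the monotonicity: if $g:\curC'\to\curC''$ is a surjective morphism then $g^{-1}$ is an injection from trunks of $\curC''$ into trunks of $\curC'$ (this is \cite[Proposition 3.15]{sunflowers19}, used already in the proof of \lemmaref{lem:Cifactor}), so the trunk count cannot increase as one descends in $\pcode$; combined with \corollaryref{cor:samenumbertrunks}, a surjective morphism is an isomorphism exactly when the trunk count does not strictly drop. Because the trunk count is an isomorphism invariant, all of this descends to isomorphism classes.

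For the ``if'' direction, suppose $f:\curC\to\curD$ is surjective and $\curC$ has exactly one more trunk than $\curD$. Then $f$ is not an isomorphism by \corollaryref{cor:samenumbertrunks}, so $\curD<\curC$. Given any code $\curE$ with $\curD\le\curE\le\curC$, choosing surjective morphisms $\curC\to\curE$ and $\curE\to\curD$ and applying the monotonicity forces the trunk count of $\curE$ to be that of $\curD$ or that of $\curC$ (the only two values available), whence $\curE\cong\curD$ or $\curE\cong\curC$ by \corollaryref{cor:samenumbertrunks}. So nothing lies strictly between $\curD$ and $\curC$, i.e. $\curC$ covers $\curD$.

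For the ``only if'' direction, suppose $\curC$ covers $\curD$. After deleting redundant neurons (\corollaryref{cor:deleteredundant}, an isomorphism that does not change the trunk count) we may assume $\curC$ has none, and then \theoremref{thm:covering-above} gives $\curD\cong\curC^{(i)}$ for some nontrivial, non-redundant neuron $i$. It remains to see that $\curC^{(i)}$ has exactly one fewer trunk than $\curC$: the morphism $\curC\to\curC^{(i)}$ is determined by the collection $\{T_j\mid T_j\neq T_i\}\cup\{T_j\cap T_i\mid T_j\cap T_i\neq T_i\}$, which (as recalled in the proof of \lemmaref{lem:Cifactor}) generates every nonempty trunk of $\curC$ except $\Tk_\curC(i)$; since preimage under this morphism preserves intersections (Proposition~\ref{prop:intersectionpreimage}) and is injective, the trunks of $\curC^{(i)}$ are in bijection with the trunks of $\curC$ that are generated by the collection, so the count drops by exactly one because $i$ non-redundant means $\Tk_\curC(i)$ is not generated. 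Hence $\curC$ has exactly one more trunk than $\curD$.

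I expect the only delicate point to be the trunk bookkeeping in the last paragraph, namely that forming $\curC^{(i)}$ loses the single trunk $\Tk_\curC(i)$ and no other. That reduces to the generation claim: every nonempty trunk of $\curC$ other than $\Tk_\curC(i)$ is an intersection of members of $\{T_j\mid T_j\neq T_i\}\cup\{T_j\cap T_i\mid T_j\cap T_i\neq T_i\}$. One proves it by writing an arbitrary nonempty trunk as an intersection of simple trunks and splitting cases according to whether $T_i$ appears, using non-redundancy of $i$ to rule out the degenerate cases in which the leftover intersection would collapse back to $\Tk_\curC(i)$ (and non-triviality together with non-redundancy of $i$ to ensure $\Tk_\curC(i)$ is a proper trunk genuinely present in $\curC$). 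Everything else is a direct application of the already-established results.
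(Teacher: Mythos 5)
Your proof is correct and follows essentially the same route as the paper: the ``only if'' direction invokes \theoremref{thm:covering-above} to identify $\curD$ with some $\curC^{(i)}$ and counts that this drops exactly one trunk (namely $\Tk_\curC(i)$), while the ``if'' direction sandwiches an intermediate $\curE$ between the two possible trunk counts and applies \corollaryref{cor:samenumbertrunks}. The only difference is that you spell out the generation bookkeeping that the paper states without elaboration.
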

\begin{proof}
Suppose that $\curC$ covers $\curD$. By \theoremref{thm:covering-above}, $\curD\cong \curC^{(i)}$ for some non-redundant $i\in[n]$. But $\curC^{(i)}$ has exactly one less trunk than $\curC$, since the surjective morphism $\curC\to \curC^{(i)}$ is determined by a set of trunks which generates all trunks in $\curC$ except for $\Tk_\curC(i)$.

For the converse, suppose that $\curC$ has exactly one more trunk than $\curD$. For any code $\curE$ with $\curC \ge \curE \ge \curD$, we see that $\curE$ has as many trunks as either $\curC$ or $\curD$ depending on which inequality is strict, and \corollaryref{cor:samenumbertrunks} implies that $\curE$ is isomorphic to one of $\curC$ or $\curD$. Thus $\curC$ covers $\curD$ as desired.
\end{proof}

\section{Intersection-complete neural codes}
\label{sec:int-comp-up-covering}
This section is supplementary to \sectionref{sec:up-covering}. We will focus only on intersection-complete codes. It turns out that describing all codes that cover a given neural code $\curC$ in $\pcode$ is a simpler task if we assume that $\curC$ is intersection-complete and restrict our attention to the intersection-complete codes that cover it. We will use these results in \sectionref{sec:up-covering} to describe the general case. Isolated subsets defined in \definitionref{def:isolated-subcode} are the key objects that we will use to describe the covering relation in $\pcode$ for intersection-complete codes. Our main construction is as follows.

\begin{definition}\label{def:int-comp-covering}
Let $\curC$ be any intersection-complete code and let $\curI\subseteq \curC$ be an isolated subset with minimal element $\mu$. Define 
\begin{equation}
\curC_{[\curI]} = \, \{\mu\}\, \cup\, \curC\setminus\, \curI\, \cup\, (\curI)_\alpha
\end{equation}
where $(\cdot)_\alpha$ is defined the same as in \definitionref{def:all-covering}. In other words, we obtain $\curC_{[\curI]}$ by adding a new neuron $\alpha$ to every codeword in $\curI$ while also maintaining $\mu$ as a codeword.
\end{definition}

When $\curI = \{\mu\}$, the code $\curC_{[\curI]}$ is the result of adding $(\mu)_\alpha$ as a new codeword. Furthermore, we can see that $\curC\leq\curC_{[\curI]}$, since there is a surjective morphism from $\curC_{[\curI]}$ to $\curC$ given by the deletion of the neuron $\alpha$. Our goal is to prove that the construction in \definitionref{def:int-comp-covering} is the only way to construct intersection-complete covering codes of a given intersection-complete code. We start with some supporting propositions and lemmas.

\begin{proposition}
\label{prop:trunks-elems-bijection}
A code $\curC$ is intersection-complete if and only if the assignment $\sigma\mapsto \Tk_\curC(\sigma)$ defines a bijection between $\curC$ and its nonempty trunks.
\end{proposition}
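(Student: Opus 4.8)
The plan is to separate the statement into an easy half that holds unconditionally and a harder half that carries all the content. First I would observe that the assignment $\sigma\mapsto\Tk_\curC(\sigma)$ always sends a codeword to a \emph{nonempty} trunk, since $\sigma\in\Tk_\curC(\sigma)$, and that it is \emph{always} injective on $\curC$, regardless of intersection-completeness: if $\sigma,\tau\in\curC$ satisfy $\Tk_\curC(\sigma)=\Tk_\curC(\tau)$, then $\sigma\in\Tk_\curC(\tau)$ and $\tau\in\Tk_\curC(\sigma)$, which forces $\tau\subseteq\sigma$ and $\sigma\subseteq\tau$, hence $\sigma=\tau$. So the entire proposition reduces to showing that $\curC$ is intersection-complete if and only if this injection is \emph{surjective} onto the set of nonempty trunks.

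For the forward implication, assume $\curC$ is intersection-complete and take an arbitrary nonempty trunk $\Tk_\curC(\rho)$. I would let $\mu$ be the intersection of all codewords lying in $\Tk_\curC(\rho)$; this is a finite nonempty intersection of codewords, so intersection-completeness (extended from pairwise to finite nonempty intersections by induction, using that every code here is finite) gives $\mu\in\curC$. Then $\rho\subseteq\mu$ because each codeword in $\Tk_\curC(\rho)$ contains $\rho$, whence $\Tk_\curC(\mu)\subseteq\Tk_\curC(\rho)$; conversely each codeword of $\Tk_\curC(\rho)$ contains $\mu$ by construction, so $\Tk_\curC(\rho)\subseteq\Tk_\curC(\mu)$. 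Thus $\Tk_\curC(\rho)=\Tk_\curC(\mu)$ is in the image, giving surjectivity and hence a bijection.

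For the reverse implication, assume the map is a bijection onto the nonempty trunks and let $\sigma,\tau\in\curC$. The trunk $\Tk_\curC(\sigma\cap\tau)$ is nonempty (it contains $\sigma$), so by surjectivity there is some $\rho\in\curC$ with $\Tk_\curC(\rho)=\Tk_\curC(\sigma\cap\tau)$. From $\rho\in\Tk_\curC(\rho)$ I get $\sigma\cap\tau\subseteq\rho$, and from $\sigma,\tau\in\Tk_\curC(\sigma\cap\tau)=\Tk_\curC(\rho)$ I get $\rho\subseteq\sigma$ and $\rho\subseteq\tau$, so $\rho\subseteq\sigma\cap\tau$; hence $\sigma\cap\tau=\rho\in\curC$, and $\curC$ is intersection-complete. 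The main thing to be careful about — rather than a genuine obstacle — is the reduction from pairwise to arbitrary finite intersections used in the forward direction, together with the standing conventions that codes are finite and contain $\emptyset$; once injectivity is noted to be automatic, there is essentially nothing deep left.
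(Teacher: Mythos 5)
Your proof is correct and uses essentially the same approach as the paper. Your version is slightly more streamlined: you observe that injectivity holds unconditionally, you apply surjectivity directly to $\Tk_\curC(\sigma\cap\tau)$ (which is exactly the inclusion-minimal trunk containing $\sigma$ and $\tau$ that the paper introduces), and you explicitly flag the induction from pairwise to finite nonempty intersections, a detail the paper's proof glosses over.
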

\begin{proof} Assume the map $\sigma\mapsto \Tk_\curC(\sigma)$ is a bijection between $\curC$ and its nonempty trunks. Let $\sigma_1,\sigma_2\in \curC$, and $T$ an inclusion-minimal trunk that contains both codewords. Then there is $\sigma$ such that $T = \Tk_\curC(\sigma)$. Clearly, $c\subseteq \sigma_1\cap\sigma_2$ since $\sigma$ is contained in both $\sigma_1$ and $\sigma_2$. If $\sigma$ were a proper subset of $\sigma_1\cap\sigma_2$, then $\Tk_\curC(\sigma_1\cap \sigma_2)$ would be properly contained in $T$, which is a contradiction to the choice of $T$. Thus, $\sigma = \sigma_1\cap \sigma_2$.

Conversely, suppose $\curC$ is intersection-complete. The assignment $\sigma\mapsto \Tk_\curC(\sigma)$ clearly defines an injection because $\sigma$ is the unique minimal element of its trunk in $\curC$. The map is a surjection because if $T$ is a nonempty trunk then the intersection of all elements of $T$ must be a codeword $\sigma$ such that $T = \Tk_\curC(\sigma)$.
\end{proof}

This proposition gives us a quick way to compare the numbers of trunks between any two intersection-complete codes based on the numbers of their codewords.

\begin{proposition}
\label{prop:int-comp-wo-alpha}
Let $\curC$ be an intersection-complete code and $\curI\subseteq \curC$ an isolated subset of $\curC$ with minimal element $\mu$. Then, $\{\mu\}\cup(\curC\setminus \curI)$ is intersection-complete.
\end{proposition}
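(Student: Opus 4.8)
The plan is to show that $\curD := \{\mu\}\cup(\curC\setminus\curI)$ is closed under pairwise intersection, using the hypothesis that $\curC$ is intersection-complete and that $\curI$ is isolated. Take two codewords $\sigma_1,\sigma_2\in\curD$ and let $\tau = \sigma_1\cap\sigma_2$; since $\curC$ is intersection-complete we already know $\tau\in\curC$, so the only thing to rule out is $\tau\in\curI\setminus\{\mu\}$. I would split into cases according to how many of $\sigma_1,\sigma_2$ lie in $\curC\setminus\curI$ versus equal $\mu$.

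First I would handle the easy cases. If $\sigma_1=\sigma_2=\mu$, then $\tau=\mu\in\curD$. If exactly one of them is $\mu$, say $\sigma_1=\mu$ and $\sigma_2\in\curC\setminus\curI$, then $\tau=\mu\cap\sigma_2\subseteq\mu$; since $\curI$ is intersection-complete with minimal element $\mu$ and $\tau\in\curC$, I claim $\tau$ cannot lie in $\curI\setminus\{\mu\}$ — indeed every element of $\curI$ contains $\mu$, so $\tau\in\curI$ would force $\mu\subseteq\tau\subseteq\mu$, i.e. $\tau=\mu\in\curD$. The main case is $\sigma_1,\sigma_2\in\curC\setminus\curI$. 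Here suppose for contradiction that $\tau\in\curI\setminus\{\mu\}$. Then $\tau$ is a non-minimal codeword of $\curI$ which is contained in $\sigma_1\in\curC\setminus\curI$; but the definition of isolated says precisely that $\sigma\not\supseteq\tau$ for every $\sigma\in\curC\setminus\curI$ and every $\tau\in\curI\setminus\{\mu\}$, a contradiction. Hence $\tau\notin\curI\setminus\{\mu\}$, so $\tau\in\{\mu\}\cup(\curC\setminus\curI)=\curD$.

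Having checked all cases, pairwise intersections stay in $\curD$, and since $\curD$ is finite this gives closure under arbitrary nonempty intersections; together with the convention that the empty intersection is all of $[n]$ (which is irrelevant here, as intersection-completeness only concerns nonempty families), this shows $\curD$ is intersection-complete. I do not expect any real obstacle: the one subtle point is remembering that every element of $\curI$ contains the minimal element $\mu$ — that is what makes the $\sigma_1=\mu$ case work — and that "non-minimal codeword of $\curI$" in Definition \ref{def:isolated-subcode} means any element of $\curI\setminus\{\mu\}$, which is exactly the set we need to exclude $\tau$ from.
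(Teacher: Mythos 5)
Your proof is correct and follows essentially the same route as the paper's: the key observations — that the intersection of two codewords in $\curC\setminus\curI$ cannot land in $\curI\setminus\{\mu\}$ by the isolation condition, and that intersecting with $\mu$ either yields $\mu$ or a codeword strictly below $\mu$ (hence outside $\curI$ since every element of $\curI$ contains $\mu$) — are exactly the two points the paper uses, just stated more tersely there. Your explicit three-case split is a fine way to organize the same argument.
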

\begin{proof}
\definitionref{def:isolated-subcode} implies that the intersection of codewords in $\curC\setminus\curI$ is an element of $\curC\setminus\curI$ or equal to $\mu$. Intersecting an element of $\curC\setminus\curI$ with $\mu$ itself yields either $\mu$ or some codeword in $\curC$ that is below $\mu$ which is definitely not in $\curI$. Thus $\{\mu\}\cup (\curC\setminus\curI)$ is intersection-complete.
\end{proof}

\begin{proposition}
\label{prop:int-comp-w-alpha}
Let $\curC$ be an intersection-complete code and $\curI\subseteq\curC$ an isolated subset. The code $\curC_{[\curI]}$ of \definitionref{def:int-comp-covering} is intersection-complete. 
\end{proposition}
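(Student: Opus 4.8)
The plan is to verify the defining property directly: take two arbitrary codewords $x,y\in\curC_{[\curI]}$ and show that $x\cap y\in\curC_{[\curI]}$, organizing the argument by a case split on whether each of $x$ and $y$ contains the new neuron $\alpha$. Recall that every codeword of $\curC_{[\curI]}$ either omits $\alpha$ and lies in $\{\mu\}\cup(\curC\setminus\curI)$, or has the form $c\cup\alpha$ with $c\in\curI$.

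First, if both $x$ and $y$ contain $\alpha$, write $x=c_1\cup\alpha$ and $y=c_2\cup\alpha$ with $c_1,c_2\in\curI$; then $x\cap y=(c_1\cap c_2)\cup\alpha$, and since $\curI$ is intersection-complete this lies in $(\curI)_\alpha\subseteq\curC_{[\curI]}$. Second, if neither contains $\alpha$, then $x,y\in\{\mu\}\cup(\curC\setminus\curI)$, which is intersection-complete by Proposition~\ref{prop:int-comp-wo-alpha}, so $x\cap y\in\{\mu\}\cup(\curC\setminus\curI)\subseteq\curC_{[\curI]}$ and we are done.

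The only remaining case is when exactly one of them contains $\alpha$, say $x=c_1\cup\alpha$ with $c_1\in\curI$ and $y\in\{\mu\}\cup(\curC\setminus\curI)$ with $\alpha\notin y$, so that $x\cap y=c_1\cap y$. If $y=\mu$, then $\mu\subseteq c_1$ because $\mu$ is the minimum of the intersection-complete code $\curI$, hence $c_1\cap\mu=\mu\in\curC_{[\curI]}$. If instead $y\in\curC\setminus\curI$, then $c_1\cap y\in\curC$ because $\curC$ is intersection-complete, and it remains only to check that $c_1\cap y$ does not land in $\curI\setminus\{\mu\}$. This is exactly where the isolated hypothesis is used: $c_1\cap y\subseteq y$, so if $c_1\cap y$ were a non-minimal element of $\curI$ then $y\in\curC\setminus\curI$ would contain it, contradicting Definition~\ref{def:isolated-subcode}. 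Therefore $c_1\cap y\in\{\mu\}\cup(\curC\setminus\curI)\subseteq\curC_{[\curI]}$.

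I expect this last case --- one codeword with $\alpha$ and one without, with the $\alpha$-free codeword outside $\curI$ --- to be the only step requiring any thought, and the isolated condition to be precisely the ingredient that closes it; the other cases follow formally from the intersection-completeness of $\curI$, of $\curC$, and of $\{\mu\}\cup(\curC\setminus\curI)$.
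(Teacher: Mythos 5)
Your proof is correct and follows the same three-case decomposition as the paper (both codewords contain $\alpha$, neither does, exactly one does), invoking the same supporting facts in each case. If anything your treatment of the mixed case is slightly more careful than the paper's: you explicitly handle the sub-case $y=\mu$ (the paper's Case 3 tacitly assumes the $\alpha$-free codeword lies in $\curC\setminus\curI$, which omits $\mu$), and you spell out why the isolated hypothesis forces $c_1\cap y$ to avoid $\curI\setminus\{\mu\}$, a step the paper compresses to a one-line appeal to Definition~\ref{def:isolated-subcode}.
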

\begin{proof}
Let $\mu$ be the minimal element of $\curI$. The codewords in $\curC_{[\curI]}$ come in two types: those in $\{\mu\}\cup(\curC\setminus \curI)$ and those of the form $c\cup\alpha$ where $c\in\curI$. For any two codewords $c_1,c_2\in \curC_{[\curI]}$, there are 3 cases to consider.

\underline{Case 1:} If both $c_1$ and $c_2$ lie in $\{\mu\}\cup(\curC\setminus \curI)$, then so does $c_1\cap c_2$ by Proposition \ref{prop:int-comp-wo-alpha}. This guarantees $c_1\cap c_2\in \curC_{[\curI]}$ in this case.

\underline{Case 2:} If $c_1=a_1\cup\alpha$ and $c_2=a_2\cup\alpha$ for some $a_1,a_2\in \curI$, then $c_1\cap c_2 = (a_1\cap a_2)\cup \{\alpha\}$. Since $\curI$ is intersection-complete, $a_1\cap a_2\in \curI$. This means $c_1\cap c_2$ is in $(\curI)_\alpha$ and a codeword of $\curC_{[\curI]}$.

\underline{Case 3:} Without loss of generality, suppose that $c_1$ contains $\alpha$ and $c_2$ does not. In this case $c_1 = a_1\cup \alpha$ for some $a_1\in\curI$, and $c_2\in \curC\setminus \curI$. We can immediately see that $c_1\cap c_2 = a_1\cap c_2$. By \definitionref{def:isolated-subcode}, we know that $a_1\cap c_2$ is in $\{\mu\}\cup (\curC\setminus \curI)$ and hence a codeword of $\curC_{[\curI]}$.

In all cases $c_1\cap c_2$ lies in $\curC_{[\curI]}$, and thus this code is intersection-complete as desired. \end{proof}

\begin{proposition}\label{prop:CI-covers-C}
Let $\curC$ be an intersection-complete code and $\curI\subseteq \curC$ an isolated subset. Then $\curC_{[\curI]}$ covers $\curC$ in $\pcode$.
\end{proposition}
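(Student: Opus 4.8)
The plan is to apply the covering criterion of \corollaryref{cor:covering-criterion} (\appendixref{sec:down-covering}). We already know there is a surjective morphism $\curC_{[\curI]}\to\curC$ given by deleting the neuron $\alpha$, so it suffices to check that $\curC_{[\curI]}$ has exactly one more trunk than $\curC$. Both codes are intersection-complete: $\curC$ by hypothesis, and $\curC_{[\curI]}$ by Proposition \ref{prop:int-comp-w-alpha}. Hence by Proposition \ref{prop:trunks-elems-bijection} the number of nonempty trunks of each code equals its number of codewords, and the whole problem collapses to the purely combinatorial claim that $|\curC_{[\curI]}| = |\curC| + 1$.

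To establish that count, I would note that the three pieces $\{\mu\}$, $\curC\setminus\curI$, and $(\curI)_\alpha$ that make up $\curC_{[\curI]}$ are pairwise disjoint: $\mu\in\curI$, so $\mu\notin\curC\setminus\curI$; and every codeword of $(\curI)_\alpha$ contains the new neuron $\alpha$, whereas no codeword of $\curC$ (in particular none of $\{\mu\}\cup(\curC\setminus\curI)$) does. Since adding $\alpha$ to distinct codewords yields distinct codewords, this gives $|\curC_{[\curI]}| = 1 + |\curC\setminus\curI| + |\curI| = 1 + |\curC|$, using $\curI\subseteq\curC$. Feeding this back through Proposition \ref{prop:trunks-elems-bijection} shows $\curC_{[\curI]}$ has exactly one more nonempty trunk than $\curC$, and \corollaryref{cor:covering-criterion} (\appendixref{sec:down-covering}) then yields that $\curC_{[\curI]}$ covers $\curC$.

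I do not expect a genuine obstacle here, since all of the needed machinery is already in place, but two small points deserve care. First, one should make sure the disjointness above is actually invoked, so that the cardinality identity is exact rather than merely an inequality. Second, \corollaryref{cor:covering-criterion} is phrased in terms of trunks while Proposition \ref{prop:trunks-elems-bijection} controls only nonempty ones; to bridge this, observe that a neuron is trivial in $\curC_{[\curI]}$ precisely when it is trivial in $\curC$ (the new neuron $\alpha$ is non-trivial because $\Tk_{\curC_{[\curI]}}(\alpha) = (\curI)_\alpha \neq \varnothing$), so the two codes have the same number of empty trunks and their total trunk counts still differ by exactly one.
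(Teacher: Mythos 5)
Your proof follows the same strategy as the paper: apply \corollaryref{cor:covering-criterion}, observe the surjection given by deleting $\alpha$, use Proposition \ref{prop:int-comp-w-alpha} to see that $\curC_{[\curI]}$ is intersection-complete, and then invoke Proposition \ref{prop:trunks-elems-bijection} to convert trunk counts to codeword counts; the explicit disjointness computation $|\curC_{[\curI]}| = |\curC| + 1$ is just an unpacking of the paper's remark that the construction adds exactly one new codeword. So the core argument is correct and identical.

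One caution about your final parenthetical. The claim that having the same trivial neurons forces the same number of empty trunks is false, and in fact $\curC$ and $\curC_{[\curI]}$ need \emph{not} agree on whether the empty trunk appears. For example, take $\curC = \{\emptyset, 1\}\subseteq 2^{[1]}$ and $\curI = \{\emptyset\}$ (which is isolated, vacuously), so that $\curC_{[\curI]} = \{\emptyset, 1, \alpha\}$. Neither code has a trivial neuron, yet $\curC$ has no empty trunk (every $\Tk_\curC(\sigma)$, $\sigma\subseteq\{1\}$, is nonempty), while $\curC_{[\curI]}$ does have one, namely $\Tk_{\curC_{[\curI]}}(\{1,\alpha\})=\varnothing$. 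The correct resolution is not to match empty-trunk counts but to note that the total trunk count \emph{including} the empty trunk is not an isomorphism invariant (it depends on the ambient $[n]$), so \corollaryref{cor:covering-criterion} and \corollaryref{cor:samenumbertrunks} must be read as counting nonempty trunks; this is precisely how the paper applies the criterion, and then Proposition \ref{prop:trunks-elems-bijection} finishes the argument with no further bridging needed.
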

\begin{proof}
There is a natural surjective morphism $\curC_{[\curI]}\to \curC$ given by deleting the neuron $\alpha$. By \corollaryref{cor:covering-criterion}, it suffices to prove that $\curC_{[\curI]}$ has exactly one more nonempty trunk than $\curC$. Since $\curC$ is intersection-complete, $\curC_{[\curI]}$ is intersection-complete by Proposition \ref{prop:int-comp-w-alpha}. Note that nonempty trunks in an intersection-complete code are in bijection with its codewords. \definitionref{def:int-comp-covering} shows that $\curC_{[\curI]}$ has one more codeword than $\curC$, so the result follows.
\end{proof}

\begin{lemma}
\label{lem:almost-bijection}
Let $\curC\subseteq 2^{[n]}$ be an intersection-complete code, and $i\in[n]$ a non-redundant, nontrivial neuron. Let $\mu$ be the minimal element of $\Tk_\curC(i)$, and let $f:\curC\to\curC^{(i)}$ be the natural surjective morphism. Then the following are true:
\begin{itemize}
\item[(i)] $\mu\setminus\{i\}$ is also a codeword in $\curC$, and 
\item[(ii)] $f(\mu) = f(\mu\setminus \{i\})$. 
\end{itemize}
\end{lemma}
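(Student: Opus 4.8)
The plan is to first pin down the combinatorial structure of the minimal codeword $\mu$, then deduce (i) from non-redundancy of $i$ together with intersection-completeness, and finally verify (ii) by comparing, trunk by trunk, the membership of $\mu$ and of $\mu\setminus\{i\}$ in the generating trunks of $\curC^{(i)}$. I would begin by recording the basic facts about $\mu$: since $i$ is nontrivial, $\Tk_\curC(i)$ is a nonempty trunk, and since $\curC$ is intersection-complete the intersection of all its (finitely many) codewords is again a codeword; this intersection contains $i$ because each codeword of $\Tk_\curC(i)$ does, so it is the unique minimal codeword $\mu$, and moreover $\mu\subseteq c$ for every $c\in\Tk_\curC(i)$. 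In particular $i\in\mu$, and $\Tk_\curC(\mu)=\Tk_\curC(i)$ (the inclusion $\subseteq$ holds since $i\in\mu$, and $\supseteq$ since $\mu$ lies inside every codeword of $\Tk_\curC(i)$).

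For part (i), set $\sigma:=\mu\setminus\{i\}$. Because $\sigma$ avoids the neuron $i$, non-redundancy of $i$ forces $\Tk_\curC(\sigma)\neq\Tk_\curC(i)$; on the other hand $\sigma\subseteq\mu$ gives $\Tk_\curC(\sigma)\supseteq\Tk_\curC(\mu)=\Tk_\curC(i)$, so this inclusion is proper and there is a codeword $c\in\curC$ with $\sigma\subseteq c$ and $i\notin c$. Then $c\cap\mu$ is a codeword of $\curC$ (by intersection-completeness), it contains $\sigma$, it is contained in $\mu$, and it omits $i$; the only such set is $\sigma$ itself, so $\mu\setminus\{i\}=c\cap\mu\in\curC$.

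For part (ii), recall that $f$ is the morphism determined (see \definitionref{def:covered-codes}) by the trunks $T_j$ with $T_j\neq T_i$ together with the trunks $T_j\cap T_i$ with $T_j\cap T_i\neq T_i$ (writing $T_j=\Tk_\curC(j)$), so that $f(c)$ records exactly which of these trunks contain $c$. It therefore suffices to show that $\mu$ and $\mu\setminus\{i\}$ lie in the same such trunks. For a simple trunk $T_j$ with $T_j\neq T_i$ we have $j\neq i$, hence $j\in\mu\iff j\in\mu\setminus\{i\}$ and the two codewords agree on $T_j$. For a trunk $T_j\cap T_i=\Tk_\curC(\{i,j\})$: if $j\in\mu$ then every codeword containing $i$ also contains $\mu$ and hence $j$, so $\Tk_\curC(\{i,j\})=\Tk_\curC(i)=T_i$ and this trunk is not among the generators; thus any generator of the form $T_j\cap T_i$ has $j\notin\mu$, and then neither $\mu$ nor its subset $\mu\setminus\{i\}$ contains $j$, so both miss $T_j\cap T_i$. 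Hence $f(\mu)=f(\mu\setminus\{i\})$.

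I expect the crux to be part (ii): the trunks capable of separating $\mu$ from $\mu\setminus\{i\}$ are $T_i$ itself and the intersections $T_j\cap T_i$ with $j\in\mu$, and the construction of $\curC^{(i)}$ precisely discards all of these — the first by fiat, and the second because they collapse onto $T_i$. Everything else, including part (i), is a routine application of intersection-completeness once one realizes $\mu\setminus\{i\}$ as $c\cap\mu$.
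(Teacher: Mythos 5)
Your proof is correct and follows essentially the same route as the paper's. In part (i) you use non-redundancy to produce a codeword $c$ with $\mu\setminus\{i\}\subseteq c$ and $i\notin c$, then recover $\mu\setminus\{i\}$ as $c\cap\mu$ via intersection-completeness (the paper reaches the same intersection argument via a short contradiction); in part (ii) you check, exactly as the paper does, that every generating trunk of $\curC^{(i)}$ either treats $\mu$ and $\mu\setminus\{i\}$ identically (the $T_j$ with $j\neq i$) or contains neither (the $T_j\cap T_i$ with $T_j\cap T_i\neq T_i$, since $j\in\mu$ would force $T_j\cap T_i=T_i$).
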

\begin{proof}
For (i), suppose for contradiction that $\mu\setminus \{i\}$ was not a codeword of $\curC$. Then the intersection of all codewords in $\Tk_\curC(\mu\setminus \{i\})$ must properly contain $\mu\setminus \{i\}$. If this intersection is not $\mu$, then intersecting it with $\mu$ will yield $\mu\setminus \{i\}$ as a codeword because $\curC$ is intersection-complete. Otherwise, the minimal element of $\Tk_\curC(\mu\setminus \{i\})$ is $\mu$, which implies that $\Tk_\curC(i) = \Tk_\curC(\mu\setminus \{i\})$. This also contradicts the assumption that $i$ is not a redundant neuron.

For (ii), recall from \definitionref{def:covered-codes} that $f$ is determined by the collection of trunks $\{T_j \mid T_j\neq T_i\}\cup \{T_j\cap T_i\mid T_j\cap T_i\neq T_i\}$ where $T_j = \Tk_\curC(j)$. Since $\mu$ is the unique minimal element of $T_i$, trunks of the form $T_j\cap T_i$ above do not contain it (otherwise they would be the same as $T_i$). Thus $f(\mu)$ records which $T_j$ contains $\mu$, except for $T_i$. In other words, $f(\mu)$ records the neurons in $\mu$ other than neuron $i$. These are exactly the neurons that are in $\mu\setminus \{i\}$, so $f(\mu) = f(\mu\setminus\{i\})$. 
\end{proof}

\begin{proposition}\label{prop:image-of-intersection}
Let $f:\curC\to \curD$ be a morphism, and let $c_1,c_2\in \curC$ be such that $c_1\cap c_2\in\curC$. Then $f(c_1\cap c_2)=f(c_1)\cap f(c_2)$.
\end{proposition}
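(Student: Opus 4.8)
The plan is to recall the characterization of morphisms via trunks (Definition~\ref{def:defining-morphisms} and the proposition following it) and then argue entirely at the level of trunks, using Proposition~\ref{prop:intersectionpreimage}. Since $f\colon\curC\to\curD$ is a morphism, we may write $f$ as the morphism determined by the trunks $T_j = f^{-1}(\Tk_\curD(j))$ in $\curC$; concretely, for any $c\in\curC$ we have $j\in f(c)$ if and only if $c\in T_j$. So to compare $f(c_1\cap c_2)$ with $f(c_1)\cap f(c_2)$ it suffices to check, for each neuron $j$ in the codomain, that $c_1\cap c_2\in T_j$ if and only if both $c_1\in T_j$ and $c_2\in T_j$.

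The key step is that each $T_j = f^{-1}(\Tk_\curD(j))$ is a \emph{trunk} in $\curC$, hence of the form $\Tk_\curC(\sigma_j)$ for some $\sigma_j\subseteq[n]$ (or empty, in which case both sides trivially exclude $j$). For a genuine trunk $\Tk_\curC(\sigma_j)$, membership is just the containment condition $\sigma_j\subseteq c$, and containment interacts with intersection in the obvious way: $\sigma_j\subseteq c_1\cap c_2$ if and only if $\sigma_j\subseteq c_1$ and $\sigma_j\subseteq c_2$. Crucially, this requires $c_1\cap c_2$ to actually be a codeword of $\curC$ so that $f(c_1\cap c_2)$ is defined and the membership question makes sense — this is exactly the hypothesis $c_1\cap c_2\in\curC$. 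Running this equivalence over all $j$ in the codomain gives $f(c_1\cap c_2)=f(c_1)\cap f(c_2)$.

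Alternatively, and perhaps more cleanly, one can phrase the whole argument using Proposition~\ref{prop:intersectionpreimage} directly: writing $D_j = \Tk_\curD(j)$, we have $c_i\in f^{-1}(D_j)$ iff $j\in f(c_i)$, and $c_1\cap c_2 \in f^{-1}(D_j) = f^{-1}\bigl(D_j\bigr)$; since each $f^{-1}(D_j)$ is a trunk in $\curC$ (a morphism pulls proper trunks back to proper trunks, and the non-proper cases are handled separately), membership in a trunk is preserved under taking intersections of codewords that remain codewords. I expect the only real subtlety — and it is minor — to be bookkeeping around the degenerate cases: when $\Tk_\curD(j)$ is not proper (it is all of $\curD$, forcing $j$ into every image) or when $f^{-1}(\Tk_\curD(j))$ is empty or all of $\curC$; in each of these the desired equivalence holds vacuously. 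Everything else is the routine observation that $\sigma\subseteq c_1\cap c_2 \iff \sigma\subseteq c_1 \text{ and } \sigma\subseteq c_2$, which is where the hypothesis $c_1\cap c_2\in\curC$ does the essential work.
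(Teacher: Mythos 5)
Your proposal is correct and takes essentially the same route as the paper: express $f$ as the morphism determined by a collection of trunks $T_j$, then observe that $c_1\cap c_2\in T_j$ if and only if $c_1\in T_j$ and $c_2\in T_j$, since membership in a trunk is a containment condition and containment commutes with intersection. The paper states this in one line; your version simply makes the trunk-membership equivalence and the degenerate cases explicit, and correctly identifies that the hypothesis $c_1\cap c_2\in\curC$ is needed only so that $f(c_1\cap c_2)$ is defined.
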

\begin{proof}
We know that $f$ is determined by some collection of trunks $\{T_1,\ldots, T_m\}$ in $\curC$. But this implies that \[
f(c_1\cap c_2) = \{j\in [m]\mid c_1\cap c_2\in T_j\} = \{j\in[m]\mid c_1\in T_j\text{ and } c_2\in T_j\} = f(c_1)\cap f(c_2). 
\]
\end{proof}

\begin{lemma}\label{lem:image-of-Ti}
Let $\curC\subseteq 2^{[n]}$ be an intersection-complete code, and $i\in [n]$ be a non-redundant, nontrivial neuron. Let $f:\curC\to\curC^{(i)}$ be the natural surjective morphism. Then $\curC^{(i)}$ is intersection-complete, and $\curI = f(\Tk_\curC(i))$ is an isolated subset in $\curC^{(i)}$.
\end{lemma}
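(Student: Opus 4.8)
The plan is to establish the two claims separately, leaning on Proposition~\ref{prop:image-of-intersection} (that $f$ commutes with intersections that stay inside $\curC$) and on the explicit recipe for $f$ from Definition~\ref{def:covered-codes}. For intersection-completeness of $\curC^{(i)}$: since $f$ is surjective, any two codewords of $\curC^{(i)}$ can be written as $f(c_1),f(c_2)$ with $c_1,c_2\in\curC$; intersection-completeness of $\curC$ gives $c_1\cap c_2\in\curC$, and then $f(c_1)\cap f(c_2)=f(c_1\cap c_2)\in\curC^{(i)}$ by Proposition~\ref{prop:image-of-intersection}. The identical argument applied to the simple trunk $\Tk_\curC(i)$ (which is intersection-complete, being a trunk of an intersection-complete code) shows that $\curI=f(\Tk_\curC(i))$ is intersection-complete; it is nonempty because $i$ is nontrivial. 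Finally, letting $\mu$ be the minimal element of $\Tk_\curC(i)$, the relation $\mu\subseteq c$ for every $c\in\Tk_\curC(i)$ together with Proposition~\ref{prop:image-of-intersection} gives $f(\mu)\subseteq f(c)$ for all such $c$, so $\nu:=f(\mu)$ is the minimal element of $\curI$.

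The real content is verifying the isolation condition of Definition~\ref{def:isolated-subcode}: no codeword of $\curC^{(i)}\setminus\curI$ may contain a non-minimal codeword of $\curI$. I would read off the needed information from the neurons of $\curC^{(i)}$ corresponding to the ``capped'' trunks $T_j\cap T_i$ (those with $T_j\cap T_i\neq T_i$), where $T_j=\Tk_\curC(j)$: such a neuron lies in $f(c)$ exactly when $c\in T_j\cap T_i$, i.e.\ when both $j\in c$ and $i\in c$. First, if $\sigma\in\curC^{(i)}\setminus\curI$ and $f(c)=\sigma$, then $i\notin c$ (otherwise $c\in\Tk_\curC(i)$ and $\sigma\in\curI$), so $\sigma$ contains no capped neuron. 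Second, if $\tau\in\curI\setminus\{\nu\}$, choose $c'\in\Tk_\curC(i)$ with $f(c')=\tau$; since $\tau\neq\nu=f(\mu)$ we must have $c'\neq\mu$, hence $\mu\subsetneq c'$ by minimality of $\mu$, and we may pick $j\in c'\setminus\mu$. Because $\mu$ is the minimal element of $T_i$, the inclusion $T_i\subseteq T_j$ would force $j\in\mu$; as $j\notin\mu$, this means $T_j\cap T_i\neq T_i$, so the corresponding capped neuron genuinely exists in $\curC^{(i)}$, and it lies in $\tau=f(c')$ since $j,i\in c'$. Combining the two observations, $\tau$ contains a capped neuron that $\sigma$ does not, so $\tau\not\subseteq\sigma$, which is precisely the isolation condition.

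I expect the one delicate step to be the second half of the isolation argument: for each non-minimal codeword $\tau$ of $\curI$, pinning down an explicit capped neuron witnessing $\tau\not\subseteq\sigma$, which rests on checking that the chosen $j\in c'\setminus\mu$ yields $T_j\cap T_i\neq T_i$ — exactly where minimality of $\mu$ in $\Tk_\curC(i)$ enters. (Nontriviality of $i$ is what makes $\curC^{(i)}$ and $\curI$ nonempty; non-redundancy is not strictly needed for the two conclusions stated here, but it is the natural setting, since by Lemma~\ref{lem:almost-bijection} it pins down $\mu$ and $\mu\setminus\{i\}$ as the only codewords identified by $f$ and is what makes $\curC^{(i)}$ a proper cover of $\curC$.) Everything else follows routinely from Proposition~\ref{prop:image-of-intersection} and the construction of $\curC^{(i)}$.
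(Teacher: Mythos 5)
Your proof is correct, and for the interesting part (isolation) it takes a genuinely different route from the paper's. The intersection-completeness claims are handled the same way in substance (you unpack the argument that the paper delegates to \cite[Theorem 1.5]{morphisms20}, using Proposition~\ref{prop:image-of-intersection} directly). The divergence is in verifying Definition~\ref{def:isolated-subcode}. The paper argues by contradiction: given $c_1\in\curC^{(i)}\setminus\curI$ with $c_1\supset c_2\in\curI\setminus\{\mu'\}$, it pulls back to preimages $a_1,a_2$, applies Proposition~\ref{prop:image-of-intersection} to $a_1\cap a_2$, and invokes Lemma~\ref{lem:almost-bijection}(ii) (that $f$ identifies only $\mu$ with $\mu\setminus\{i\}$) to force $a_1\cap a_2=\mu\setminus\{i\}$ and hence $c_1\cap c_2=\mu'$, a contradiction. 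You instead argue \emph{forward}, exhibiting a concrete separating neuron: every $\sigma\in\curC^{(i)}\setminus\curI$ misses all of the ``capped'' neurons (those indexed by trunks $T_j\cap T_i\neq T_i$), because any preimage of $\sigma$ avoids $i$ and so lies outside $T_i$; whereas every $\tau\in\curI\setminus\{\nu\}$ has a preimage $c'\supsetneq\mu$ in $\Tk_\curC(i)$, and taking $j\in c'\setminus\mu$ gives $T_j\cap T_i\neq T_i$ (else $j\in\mu$) with $c'\in T_j\cap T_i$, so the corresponding capped neuron lies in $\tau$ but not in $\sigma$. This is a cleaner, more elementary witness argument. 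It has the side benefit that you do not need Lemma~\ref{lem:almost-bijection} at all, and as you note, non-redundancy of $i$ plays no role in this particular lemma (it is needed elsewhere, notably in Theorem~\ref{thm:covering-above} and in Theorem~\ref{thm:int-comp-covering} where Lemma~\ref{lem:almost-bijection}(ii) is used to see that $h$ is a bijection). What the paper's approach buys is economy: Lemma~\ref{lem:almost-bijection} is proved anyway for the later argument, so the paper reuses it rather than re-examining the defining trunks of $\curC^{(i)}$.
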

\begin{proof}
It is known that the image of an intersection-complete code is again intersection-complete (see \cite[Theorem 1.5]{morphisms20}), and so $\curC^{(i)}$ is intersection-complete. The set $\Tk_\curC(i)$ is certainly intersection-complete, and since $\curI$ is the image of this set under $f$, it follows that $\curI$ is intersection-complete. To see that $\curI$ is isolated in $\curC^{(i)}$, it remains to show that it satisfies \definitionref{def:isolated-subcode}.   

 Let $c_1\in \curC^{(i)}\setminus \curI$ and $c_2\in\mathcal{I}\setminus\{\mu'\}$ where $\mu'$ is the minimal element of $\curI$. Note that $\mu'=f(\mu)=f(\mu\setminus\{i\})$ where $\mu$ is the minimal element of $\Tk_\curC(i)$ by part (ii) of \lemmaref{lem:almost-bijection}. Suppose for contradiction that $c_1\supset c_2$, which implies $c_1\cap c_2\in\curI\setminus\{\mu'\}$. Let $a_1, a_2\in\curC$ such that $f(a_1)=c_1$ and $f(a_2)=c_2$. By Proposition \ref{prop:image-of-intersection}, $f(a_1\cap a_2)=c_1\cap c_2$. Because $c_1\notin\curI$, $a_1$ is not in $\Tk_\curC(i)$, and neither is $a_1\cap a_2$. Again, part (ii) of  \lemmaref{lem:almost-bijection} implies that $f$ is bijective except that it identifies $\mu$ and $\mu\setminus\{i\}$. Since $a_1\cap a_2$ is not in of $\Tk_\curC(i)$, this implies that if $c_1\cap c_2=f(a_1\cap a_2)$ is an element of $\curI = f(\Tk_\curC(i))$, then the only case that can happen is $a_1\cap a_2 = \mu\setminus \{i\}$. Then $c_1\cap c_2$ must be $\mu'$, which is a contradiction.


\end{proof}

We are now ready to prove the following theorem.

\begin{theorem}
\label{thm:int-comp-covering}
Let $\curC\subseteq 2^{[n]}$ and $\curD\subseteq 2^{[m]}$ be intersection-complete codes. The following are equivalent:
\begin{itemize}
\item[(i)] $\curC$ covers $\curD$ in $\pcode$, and
\item[(ii)] $\curC\cong \curD_{[\curI]}$ where $\curI\subseteq\curD$ is some isolated subset.
\end{itemize}
\end{theorem}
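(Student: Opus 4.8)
The plan is to dispatch (ii)$\,\Rightarrow\,$(i) immediately and to concentrate on (i)$\,\Rightarrow\,$(ii). If $\curC\cong\curD_{[\curI]}$ for an isolated subset $\curI\subseteq\curD$, then Proposition~\ref{prop:CI-covers-C} says $\curD_{[\curI]}$ covers $\curD$, hence so does $\curC$. For the converse, assume $\curC$ covers $\curD$. By \theoremref{thm:covering-above} there is a non-redundant, nontrivial neuron $i$ with $\curD\cong\curC^{(i)}$, so I would replace $\curD$ by $\curC^{(i)}$ and take $f:\curC\to\curC^{(i)}$ to be the natural surjective morphism. By \lemmaref{lem:image-of-Ti}, $\curC^{(i)}$ is intersection-complete and $\curI:=f(\Tk_\curC(i))$ is an isolated subset of it, with minimal element $\mu':=f(\mu)=f(\mu\setminus\{i\})$ where $\mu=\min\Tk_\curC(i)$ (here using \lemmaref{lem:almost-bijection}). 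It then suffices to produce an isomorphism $\curC\cong\curC^{(i)}_{[\curI]}$.

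The structural fact driving this, noted already in the proof of \lemmaref{lem:image-of-Ti}, is that $f$ is injective except that it identifies the codewords $\mu$ and $\mu\setminus\{i\}$, the first lying in $\Tk_\curC(i)$ and the second not. Hence $f$ restricts to a bijection from $\Tk_\curC(i)$ onto $\curI$ and to a bijection from $\curC\setminus\Tk_\curC(i)$ onto $\{\mu'\}\cup(\curC^{(i)}\setminus\curI)$. I would then define $\phi:\curC\to\curC^{(i)}_{[\curI]}$ by $\phi(c)=f(c)\cup\{\alpha\}$ when $i\in c$ and $\phi(c)=f(c)$ otherwise, with $\alpha$ the new neuron of $\curC^{(i)}_{[\curI]}$. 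Comparing this with $\curC^{(i)}_{[\curI]}=\{\mu'\}\cup(\curC^{(i)}\setminus\curI)\cup(\curI)_\alpha$ and invoking the two restricted bijections shows $\phi$ is a bijection onto $\curC^{(i)}_{[\curI]}$. (Equivalently, one may introduce $\phi$ as the morphism of \definitionref{def:defining-morphisms} determined by the proper trunks among $\{\Tk_\curC(i)\}$ and the pullback trunks $f^{-1}(\Tk_{\curC^{(i)}}(j))$; unwinding the definition recovers the same formula, makes the morphism property automatic, and leaves only a short identification of the image with $\curC^{(i)}_{[\curI]}$ up to relabeling neurons.)

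To finish, I would verify $\phi$ is a surjective morphism and then apply \corollaryref{cor:samenumbertrunks}. Surjectivity is the bijectivity above. For the morphism property, since $\phi(c)$ and $f(c)$ agree outside the coordinate $\alpha$ and $\alpha\in\phi(c)$ exactly when $i\in c$, one gets for $\sigma\not\ni\alpha$ that $\phi^{-1}(\Tk_{\curC^{(i)}_{[\curI]}}(\sigma))=f^{-1}(\Tk_{\curC^{(i)}}(\sigma))$, which is a proper trunk whenever the left side is proper because $f$ is a morphism, and for $\sigma=\sigma_0\cup\{\alpha\}$ that $\phi^{-1}(\Tk_{\curC^{(i)}_{[\curI]}}(\sigma))=\Tk_\curC(i)\cap f^{-1}(\Tk_{\curC^{(i)}}(\sigma_0))$, an intersection of two trunks in $\curC$ hence itself a trunk, nonempty because $\phi$ is onto and proper because it is contained in the proper trunk $\Tk_\curC(i)$. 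Finally $\curC$ and $\curC^{(i)}_{[\curI]}$ both cover $\curC^{(i)}$---the former by hypothesis, the latter by Proposition~\ref{prop:CI-covers-C}---so \corollaryref{cor:covering-criterion} gives them equally many trunks, and \corollaryref{cor:samenumbertrunks} promotes $\phi$ to an isomorphism.

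I expect the main obstacle to be the trunk bookkeeping in the morphism check for $\phi$: tracing preimages of trunks containing $\alpha$ through the ``almost bijection'' $f$ to intersections with $\Tk_\curC(i)$, and confirming properness in degenerate cases such as $\curI=\{\mu'\}$ or $\curC^{(i)}$ having neurons common to all codewords. The trunk-determined presentation of $\phi$ is the main device for circumventing this, at the cost of a brief argument identifying its image.
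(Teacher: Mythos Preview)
Your proposal is correct and follows essentially the same route as the paper: the map $\phi$ is exactly the paper's $h$, and your parenthetical trunk-determined description of $\phi$ is precisely how the paper establishes that $h$ is a morphism. The only minor difference is the last step: the paper infers equal trunk counts from the bijectivity of $h$ together with intersection-completeness (via Proposition~\ref{prop:trunks-elems-bijection}), whereas you infer it from both codes covering $\curC^{(i)}$ via \corollaryref{cor:covering-criterion}; both arguments are valid and equally short.
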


\begin{proof}
The fact that (ii) implies (i) is proven in Proposition \ref{prop:CI-covers-C}. For the converse, suppose that $\curC$ covers $\curD$. From \theoremref{thm:covering-above}, we know that $\curD\cong \curC^{(i)}$ for some non-redundant, nontrivial neuron $i$. Since the choice of isolated subset $\curI\subseteq \curD$ is invariant up to isomorphism, it suffices to prove the result when $\curD = \curC^{(i)}\subseteq 2^{[m]}$. 

Let $f:\curC\to \curC^{(i)}$ be the natural surjective morphism, $T_j = \Tk_\curC(j)$ for $j\in [n]$, and $\curI = f(T_i)$. By \lemmaref{lem:image-of-Ti}, $\curI$ is an isolated subset of $\curC^{(i)}$ and so we may define $\curE = (\curC^{(i)})_{[\curI]}\subseteq 2^{[m+1]}$ with the new neuron $\alpha=m+1$. Let $g:\curE\to \curC^{(i)}$ be the natural surjective morphism defined by deleting the new neuron $m+1$. Lastly, let $\mu'$ be the minimal element of $\curI$ in $\curC^{(i)}$, and $\mu$ the minimal element of $T_i$ in $\curC$. We will prove that $\curE\cong\curC$ by constructing $h:\curC\to\curE$ as follows:
\[
h(c) = \begin{cases}
f(c) & c\notin T_i,\\
f(c)\cup \{m+1\} & c\in T_i. 
\end{cases}
\]
Observe that $g(h(c)) = f(c)$ for all $c$. Moreover, observe that $h$ is bijective since $\curE$ is constructed from $\curC^{(i)}$ by adding $m+1$ to all codewords in $f(T_i)$ while maintaining $\mu'$ as a codeword, and $\mu' = f(\mu\setminus \{i\})$ by (ii) of \lemmaref{lem:almost-bijection}. 

\[
\begin{tikzcd}
\curC \arrow[r, rightarrow, "h"] \arrow[rd, twoheadrightarrow, "f",swap] & \curE=(\curC^{(i)})_{[\curI]}\arrow[d,twoheadrightarrow,"g"]\\
& \curC^{(i)}
\end{tikzcd}
\] 
To prove that $h$ is a morphism, observe that it is the morphism determined by the collection of trunks $\{T_j \mid T_j\neq T_i\}\cup \{T_j\cap T_i\mid T_j\cap T_i\neq T_i\}$ together with a new trunk $\Tk_\curC(i)$ corresponding to the new neuron $m+1$. Since $h$ is a bijective morphism between two intersection-complete codes $\curC$ and $\curE$, they must have the same number of trunks and hence be isomorphic.
\end{proof}
\end{document}